\newtheorem{thm}{Theorem}[section]
\newtheorem{cor}[thm]{Corollary}
\newtheorem{lem}[thm]{Lemma}
\theoremstyle{definition}
\theoremstyle{remark}
\numberwithin{equation}{section}
\begin{document}

\title[Solutions-Stability  of  Wilson's functional equation]{Solutions and stability of variant of  Wilson's functional equation}%
\author[ E. Elqorachi and A. Redouani]{ Elqorachi Elhoucien and  Redouani Ahmed}%

\thanks{{2000 Mathematics Subject Classifications}:
39B82, 39B32, 39B52.}
\keywords{ Group, Semigroup-Involution, D'Alembert's equation, Wilson's equation, Automorphism, Homomorphism, Multiplicative function, Hyers-Ulam stability, Superstability}%

\begin{abstract}
In this paper we will investigate the solutions and stability of the
generalized variant of Wilson's functional equation
$$ (E):\;\;\;\;
f(xy)+\chi(y)f(\sigma(y)x)=2f(x)g(y),\; x,y\in G,$$ where $G$ is a
group, $\sigma$ is an involutive morphism of $G$  and $\chi$ is a
character of $G$. (a) We solve $(E)$ when $\sigma$ is an involutive
automorphism, and we obtain  some properties  about solutions of
$(E)$ when $\sigma$ is an involutive anti-automorphism. (b) We
obtain the Hyers Ulam stability of equation $(E)$.  As an
application, we prove the superstability of the functional equation
$f(xy)+\chi(y)f(\sigma(y)x)=2f(x)f(y),\; x,y\in G.$
\end{abstract}
\maketitle
\section{Introduction} D'Alembert's functional equation
\begin{equation}\label{eq11}
    f(x+y)+f(x-y)=2f(x)f(y), x,y\in G
\end{equation} also called the cosine functional equation has a long
history going back to J.d'Alembert. Equation (\ref{eq11}) plays an
important role in determining the sum of two
vectors in various Euclidean and non-Euclidean geometries.\\
The continuous solutions $f$: $\mathbb{R}\longrightarrow \mathbb{C}$
of d'Alembert's functional equation (\ref{eq11}) are known: A part
from the trivial solution $f=0$, they are
$f_{\lambda}(x)=\cos(\lambda x)$, $x\in \mathbb{R}$ where the
parameter $\lambda$ ranges over $\mathbb{C}$ (see for example
\cite{ac1})
\\Several authors have determined the general solution $f$:
$G\longrightarrow \mathbb{C}$ of the following generalization of
d'Alembert's functional equation
\begin{equation}\label{eq12}
    f(xy)+f(x\sigma(y))=2f(x)f(y),\; x,y\in G
\end{equation}in abelian case and in non abelian case. \\Probably the
first result in non abelian group was obtained by Kannappan
\cite{ka1}. Under the condition $f(xyz)=f(yxz)$ for all $x,y,z\in
G$, the solutions  of equation (\ref{eq12}) are of the form
$f(x)=\frac{\phi(x)+\phi(\sigma(x))}{2}$, where $\phi$ is multiplicative.\\
There has been quite a development of the theory of d'Alembert's
functional equation (\ref{eq11}) during the last years, on non
abelian groups, as shown in works by Dilian yang about compact
groups \cite{di1,di2,di3}, Stetk\ae r \cite{st1} for step 2
nilpotent groups, Friis \cite{fri1} for results on Lie groups and
Davison \cite{da1,da2} for general groups, even monoids. The most comprehensive recent study is by stetk\ae r  \cite{st10,st11}. \\
Recently, Stetk\ae r \cite{st2} obtained the complex valued
solutions of the following version of d'Alembert's functional
equation
\begin{equation}\label{eq13}
    f(xy)+\chi(y)f(xy^{-1})=2f(x)f(y),\; x,y\in G,
\end{equation} where $\chi:$ $G\longrightarrow \mathbb{C}$ is a character of $G$. The non-zero solutions of equation (\ref{eq13}) are
the normalized traces of certain representation of the group $G$ on
$\mathbb{C}^{2}$\\ In the same year Stetk\ae r \cite{st4} obtained
the complex valued solution of the following variant   of
d'Alembert's functional equation
\begin{equation}\label{eq133}
    f(xy)+f(\sigma(y)x))=2f(x)f(y),\; x,y\in G,
\end{equation} where  $\sigma$ is an involutive homomorphism of $G$.  The  solutions of equation (\ref{eq133}) are
of the form $f(x)=\frac{\varphi(x)+\varphi(\sigma(x))}{2}$, $x\in
G$, where $\varphi$ is multiplicative.\\In \cite{eb1} Ebanks and
Stetk\ae r studied the solutions $f,g$: $G\longrightarrow
\mathbb{C}$ of Wilson's functional equation
\begin{equation}\label{eq14}
    f(xy)+f(xy^{-1})=2f(x)g(y),\; x,y\in G
\end{equation} and the following variant of Wilson's functional
equation (see \cite{st8})
\begin{equation}\label{eq15}
    f(xy)+f(y^{-1}x)=2f(x)g(y),\; x,y\in G.
\end{equation}They solve (\ref{eq15}) and they obtained some new
results about (\ref{eq14}). We refer also to Wilson's first
generalization of d'Alembert's functional equation:
\begin{equation}\label{eq166}
    f(x+y)+f(x-y)=2f(x)g(y),\;x,y\in \mathbb{R}.
\end{equation}For more about the functional equation (\ref{eq166})
see Acz\'el [\cite{ac1}, Section 3.2.1 and 3.2.2]. The solutions
formulas of
equation (\ref{eq166}) for abelian groups are known.\\
The stability of d'Alembert's functional equation was first
  obtained by Baker \cite{bak1} when the following theorem was proved.
  \begin{thm} \cite{bak1} (Superstability od d'Alembert's functional equation) Let $G$ be a group. If a function $f$: $G\longrightarrow \mathbb{{C}}$
  satisfies the inequality$$\mid f(x+y)+f(x-y)-2f(x)f(y)\mid\leq \delta$$
  for some $\delta>0$ and for all $x,y\in G$, then either $f$ is bounded on $G$ or $f(x+y)+f(x-y)=2f(x)f(y)$ for all $x,y\in G.$ \end{thm}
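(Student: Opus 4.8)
The plan is to establish the dichotomy directly: assuming $f$ is \emph{not} bounded on $G$, I will deduce that the defect
\[ D(x,y) := f(x+y) + f(x-y) - 2f(x)f(y), \qquad |D(x,y)| \le \delta, \]
vanishes identically. (As a warm-up sanity check, putting $y=0$ gives $2|f(x)|\,|1-f(0)| \le \delta$ for all $x$, so an unbounded $f$ must satisfy $f(0)=1$; this is reassuring but will not be needed below.)

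The key device is to compute the four-term sum
\[ S(x,y,z) := f(x+y+z) + f(x+y-z) + f(x-y+z) + f(x-y-z) \]
in two ways. Grouping it as $\bigl[f((x+y)+z)+f((x+y)-z)\bigr]+\bigl[f((x-y)+z)+f((x-y)-z)\bigr]$ and applying the definition of $D$ first in the variable $z$ and then in the variable $y$ yields
\[ S = 4f(x)f(y)f(z) + 2f(z)\,D(x,y) + D(x+y,z) + D(x-y,z). \]
Using commutativity of $G$ to rewrite the four arguments as $(x\pm z)\pm y$ and grouping as $\bigl[f((x+z)+y)+f((x+z)-y)\bigr]+\bigl[f((x-z)+y)+f((x-z)-y)\bigr]$, the symmetric computation gives
\[ S = 4f(x)f(y)f(z) + 2f(y)\,D(x,z) + D(x+z,y) + D(x-z,y). \]
Equating the two expressions, the cubic term $4f(x)f(y)f(z)$ cancels and the four remaining defect terms are each bounded in modulus by $\delta$, so
\[ |f(z)\,D(x,y) - f(y)\,D(x,z)| \le 2\delta \qquad \text{for all } x,y,z \in G. \]

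Finally I would let the unboundedness do the work. Fix $x,y \in G$; since $f$ is unbounded, choose a sequence $(z_n)$ in $G$ with $|f(z_n)| \to \infty$. From the last inequality together with $|D(x,z_n)| \le \delta$ we obtain $|f(z_n)|\,|D(x,y)| \le 2\delta + |f(y)|\,\delta$, a bound independent of $n$; dividing by $|f(z_n)|$ and letting $n \to \infty$ forces $D(x,y)=0$. As $x$ and $y$ were arbitrary, $f$ solves d'Alembert's equation on all of $G$, as claimed.

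I expect the only real obstacle to be the bookkeeping in the two evaluations of $S(x,y,z)$ — keeping track of which defect terms $D(\cdot,\cdot)$ are produced at each application and of exactly where commutativity of $G$ is invoked — after which the passage to the limit is immediate and the dichotomy follows at once.
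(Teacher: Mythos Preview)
The paper does not supply its own proof of this statement: Theorem~1.1 is quoted from Baker~\cite{bak1} as background, with no argument given. Your proof is correct. The two expansions of $S(x,y,z)$ are carried out accurately (the only place commutativity enters is in rewriting the four arguments as $(x\pm z)\pm y$, which is consistent with the additive notation of the statement), the cancellation of the cubic term and the bound $|f(z)D(x,y)-f(y)D(x,z)|\le 2\delta$ follow, and the final passage to the limit along an unbounded sequence $(z_n)$ is clean. This three-variable symmetrization is the standard route to superstability for the cosine equation and is in the spirit of Baker's original argument.
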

   A different generalization of Baker's result  was given by L.
Sz\'ekelyhidi \cite{z1,z2,z3}. It involves an interesting
generalization of the class of bounded function on a group or
semigroup. For other stability and superstability results, we can
see for example \cite{ba1}, \cite{ba2}, \cite{ba3}, \cite{el1},
\cite{el2}, \cite{ger1}, \cite{ger2} and \cite{el4}, the present
authors \cite{el3} for general groups.\\Various stability results of
Wilson's functional equation and it's generalization are obtained.
The number of papers in this subject is very high, hence, it is not
realistic to try to refer to all. The interested reader should refer
to \cite{ga1}, \cite{gav1}, \cite{for1}, \cite{h1}-\cite{h14} for a
thorough account on the subject of stability of functional
equations.\\ The main purpose of this paper is to study the
solutions and stability of the more general variant of Wilson's
functional equation
\begin{equation}\label{eq16}
    f(xy)+\chi(y)f(\sigma(y)x)=2f(x)g(y),\;x,y\in G,
\end{equation}where $G$ is a group, $\chi$ is a character
of $G$,  $\sigma$ is an involutive morphism of $G$. That is,
$\sigma(xy)=\sigma(y)\sigma(x)$ and $\sigma(\sigma(x))=x$ for all
$x,y\in G$ or $\sigma(xy)=\sigma(x)\sigma(y)$   and
$\sigma(\sigma(x))=x$ for all $x,y\in G$.\\
 We solve (\ref{eq16}) when $\sigma$ is an involutive
automorphism, and we obtain some properties of the solutions of
equation  (\ref{eq16}) when $\sigma$ is an involutive
anti-automorphism.  Furthermore,  we obtain the Hyers Ulam stability
of equation (\ref{eq16}). As an application we prove the
superstability of the functional equation
\begin{equation}\label{eq17}
     f(xy)+\chi(y)f(\sigma(y)x)=2f(x)f(y),\;x,y\in G.
\end{equation} \section{Stability of the functional equation (\ref{eq16}), where
$\sigma$ is an involutive anti-automorphism of $G$.}In this section
$\sigma$ is an involutive anti-automorphism  of $G$, that is
$\sigma(xy)=\sigma(y)\sigma(x)$ and $\sigma(\sigma(x))=x$, for all
$x,y\in G.$ The following theorem  is one of the main results of the
present paper.
\begin{thm} Let $\delta\geq0$. Let $\sigma$ be  an involutive anti-automorphism of $G$.
 Let $\chi$ be a unitary character of $G$ such that $\chi(x\sigma(x))=1$ for all $x\in G.$ Suppose that the  functions $f,g$:
 $G\longrightarrow \mathbb{C}$ satisfy the inequality
\begin{equation}\label{eq21} |f(xy)+\chi(y)f(\sigma(y)x)-2f(x)g(y)|\leq\delta
\end{equation} for all $x, y \in G.$  Under these assumptions the following statements hold: \\
(1) If $f$ is unbounded, then \\(i) $g$ is central. That is $g(xy) =
g(yx)$, for all $x,y\in G$;  $m_g$:
 $G\longrightarrow \mathbb{C}^{\ast}$ is multiplicative.
$$ (ii)\;\;\;\;
    g(x)=\chi(x)g(\sigma(x))\;\text{for all }\;x\in G,\; (\text{ if} \;\sigma(x)=x^{-1}, \;   \check{\chi}
m_{g}(G)\subseteq \{\mp 1\}).$$
$$ (iii)\;\;\;\;g(x)=m_{g}(x)g(x^{-1})\;\text{for all }\;x\in G$$
and \\(vi)
\begin{equation}\label{eq27}
g(xy)+m_{g}(y)g(xy^{-1})=2g(x)g(y)\;\text{for all }\;x,y\in G,
\end{equation}where $m_{g}(x)=2g(x)^{2}-g(x^{2})$, $x\in G$.\\
(2) If  $g$ is unbounded and $f\neq0$, then \\
(v) the pair ($f,g$) satisfies the functional equation (\ref{eq16}).
Furthermore, \\(vi) $m_g$:
 $G\longrightarrow \mathbb{C}^{\ast}$ is multiplicative, ( if $\sigma(x)=x^{-1}$,   $\check{\chi}
m_{g}(G)\subseteq \{\mp 1\}$).
$$ (vii) \;\;\;\;
g(x)=m_{g}(x)g(x^{-1}),\; g(x)=\chi(x)g(\sigma(x)),\;\\
\chi(y)f(\sigma(y)xy)=m_g(y)f(x)$$\text{for all}\;$x,y\in G$.\\
 (viii)
The pair $(f,g)$ satisfies
\begin{equation}\label{eq28}f(xy)+m_{g}(y)f(xy^{-1})=2f(x)g(y),\;x,y\in G
\end{equation} and
 $g$ satisfies equation (\ref{eq27}).
\end{thm}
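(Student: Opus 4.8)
Write $h(x,y):=f(xy)+\chi(y)f(\sigma(y)x)-2f(x)g(y)$, so that the hypothesis (\ref{eq21}) reads $|h(x,y)|\le\delta$ for all $x,y\in G$. Two elementary observations are used repeatedly. Since $\chi$ is a unitary character with $\chi(x\sigma(x))=1$, we have $|\chi(x)|=1$ and $\chi(\sigma(x))=\chi(x)^{-1}=\overline{\chi(x)}$, which lets us commute $\sigma$ past $\chi$. Taking $y=e$ in $|h|\le\delta$ gives $2|f(x)|\,|1-g(e)|\le\delta$ for all $x$; hence in case (1) $g(e)=1$ (as $f$ is unbounded), and likewise in case (2) (as $f\neq0$), so in both cases $m_g(e)=2g(e)^2-g(e^2)=1$ and $g\neq 0$. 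Finally, case (2) reduces in large part to case (1): if $g$ is unbounded and $f(x_0)\neq0$, then $|f(x_0y)+\chi(y)f(\sigma(y)x_0)|\ge 2|f(x_0)|\,|g(y)|-\delta$ is unbounded in $y$, so $f$ is unbounded as well.

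\textbf{Case (1): $f$ unbounded.} The engine is an isolation argument. One evaluates $f(xyz)$ (and its reflected partner $f(\sigma(z)\sigma(y)x)$) along several routes by applying the inequality to the pairs $(xy,z)$, $(x,yz)$, $(\sigma(z)x,y)$, $(x,\sigma(y))$, $(x,y^2)$ and suitable variants (the identity $\sigma(y)^2=\sigma(y^2)$ is handy here), and then subtracts. In each such combination the $\delta$-bounded defects $h(\cdot,\cdot)$ merely accumulate, the $f(xyz)$-terms cancel, and one is left with an inequality of the shape $|f(x)\,\Phi(y,z)|\le C_\delta$ valid for all $x$, where $\Phi$ is an explicit expression built from $g$ (and, along the reflected route, from $\chi$). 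Since $f$ is unbounded, $\Phi\equiv0$. Choosing the routes suitably one extracts, successively: the centrality $g(xy)=g(yx)$; the variant d'Alembert equation $g(xy)+m_g(y)g(xy^{-1})=2g(x)g(y)$, that is (\ref{eq27}); and the symmetry $g(x)=\chi(x)g(\sigma(x))$ of (ii) --- except that when $\sigma(x)=x^{-1}$ the last route degenerates and yields instead the constraint $\check{\chi}\,m_g(G)\subseteq\{\mp1\}$, which fed back gives $g(x)=\chi(x)g(x^{-1})$. From (\ref{eq27}) together with centrality, the remaining parts of (i) and (iii) are the classical companion-function computations: $g(x^{-1})=m_g(x^{-1})g(x)$ by putting $x=e$ in (\ref{eq27}) and using $g(e)=1$; multiplicativity of $m_g$ by expanding $g(x(yz))$ in two ways through (\ref{eq27}); whence $m_g(x)m_g(x^{-1})=m_g(e)=1$, so $m_g$ is $\mathbb{C}^\ast$-valued. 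This establishes (i)--(iv).

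\textbf{Case (2): $g$ unbounded, $f\neq0$.} By the reduction above $f$ is unbounded, so case (1) applies to $(f,g)$ and yields (vi) and the first two relations of (vii) --- these restate the conclusions (i)--(iii) --- and the fact that $g$ satisfies (\ref{eq27}) exactly. It remains to upgrade the inequality for $f$ into the identity (\ref{eq16}). Fix $x,y$ and let $z$ vary: apply the inequality at $(x,yz)$, expand $f(x(yz))$ and $f(\sigma(yz)x)=f(\sigma(z)\sigma(y)x)$ by further applications of the inequality, and cancel the resulting $g(y)g(z)$ cross-terms by means of the now-exact equation (\ref{eq27}) for $g$; what survives is $|h(x,y)\,g(z)|\le C_\delta$ for all $z$, and as $g$ is unbounded this forces $h(x,y)=0$, which is (v). The conjugation identity $\chi(y)f(\sigma(y)xy)=m_g(y)f(x)$ in (vii) is then obtained by iterating the exact equation (\ref{eq16}) (again using $\sigma(y)^2=\sigma(y^2)$) together with (\ref{eq27}). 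Finally, replacing $x$ by $xy^{-1}$ in that conjugation identity gives $\chi(y)f(\sigma(y)x)=m_g(y)f(xy^{-1})$, and substituting this into (\ref{eq16}) yields $f(xy)+m_g(y)f(xy^{-1})=2f(x)g(y)$, which is (\ref{eq28}); equation (\ref{eq27}) for $g$ is already in hand. This gives (viii).

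\textbf{Main obstacle.} The whole difficulty sits in case (1): one must locate the precise chain of substitutions so that, after subtraction, every $\delta$-defect collapses into a single term of the controllable shape $f(x)\,\Phi(y,z)$ --- and not into an uncontrollable $f(xy)\,\Phi$ or $f(\sigma(y)x)\,\Phi$ --- and so that the three successive choices realize $\Phi$ as, in turn, the centrality defect of $g$, the $\mu$-d'Alembert defect, and the $\sigma$-symmetry defect. Carrying the character $\chi$ and the anti-automorphism $\sigma$ through these manipulations, and separating out the inversion case $\sigma(x)=x^{-1}$ --- where the hypothesis $\chi(x\sigma(x))=1$ is genuinely used and only the weaker conclusion $\check{\chi}\,m_g(G)\subseteq\{\mp1\}$ survives --- is the technical heart of the proof. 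By comparison the case (2) upgrade is a routine superstability computation once (\ref{eq27}) is known exactly.
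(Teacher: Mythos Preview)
Your overall architecture matches the paper's: an isolation argument of the shape ``$|f(x)\,\Phi(y,z)|\le C_\delta$, hence $\Phi\equiv0$'' for case (1) --- the paper phrases this via the regular operators $L(y)h(x)=h(\sigma(y)x)$, $R(y)h(x)=h(xy)$ --- and for case (2) the observation that $f$ must be unbounded, followed by a superstability bound $|2g(z)\,h(x,y)|\le C_\delta$ forcing $h\equiv0$. Your reordering (derive (\ref{eq27}) first, then read off $g(y)=m_g(y)g(y^{-1})$ and multiplicativity of $m_g$ as standard companion-function facts) is legitimate. One small remark on (v): the paper's telescoping chain uses only the centrality of $g$, not (\ref{eq27}); your route through (\ref{eq27}) is a valid alternative.

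The genuine gap is in (ii). The symmetry $g(y)=\chi(y)g(\sigma(y))$ does \emph{not} fall out of the isolation routes you describe. Subtracting $h(x,y)$ from $\chi(y)h(x,\sigma(y))$ leaves the non-cancellable terms $f(xy)-f(yx)$ and $\chi(y)[f(\sigma(y)x)-f(x\sigma(y))]$, so one never isolates $f(x)[g(y)-\chi(y)g(\sigma(y))]$ against a bounded right-hand side. The paper obtains (ii) by a different device: for each $a\in G$ it first restricts to the abelian subgroup $\langle a\rangle$ (where those commutator terms vanish) whenever $f(a)\neq0$ or $f(e)\neq0$; in the remaining case $f(a)=f(e)=0$ it performs a separate computation giving $|2f(x)[g(a)-\chi(a)g(\sigma(a))]|\le 6\delta+2|f(\sigma(a))|\,|g(x)|$, which suffices if $g$ is bounded and is closed off via a forward reference to part (2)(v) when $g$ is unbounded.

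You have also inverted the logic of the parenthetical in (ii). The identity $g=\chi\,(g\circ\sigma)$ is asserted for \emph{every} involutive anti-automorphism $\sigma$, and the clause about $\sigma(x)=x^{-1}$ is an \emph{additional} conclusion, not a degenerate substitute. In the paper this extra clause is deduced from (ii) plus multiplicativity of $m_g$: one computes $m_g(\sigma(y))=\chi(y)^2 m_g(y)$, and setting $\sigma=\mathrm{inv}$ gives $(\check\chi\,m_g(y))^2=1$. Conversely, $\check\chi\,m_g(G)\subseteq\{\pm1\}$ cannot be ``fed back'' to yield $g(x)=\chi(x)g(x^{-1})$: combined with (iii) it only gives $g(x)=\pm\chi(x)g(x^{-1})$, which is strictly weaker.
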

\begin{proof}
All technical methods  that are needed in our discussion  are due to
Stetk\ae r \cite{st8}. (1) We let $L$ and $R$ denote respectively:
the left and right regular representation of $G$ on functions on
$G$. That is, $[L(y)h](x)=h(\sigma(y)x)$ and $R(y)h(x)=h(xy)$ for
$x,y\in G$ and $h$: $G\longrightarrow \mathbb{C}$. We notice here
that $L(x)R(y)=R(y)L(x)$ and $L(yz)h=L(y)[L(z)h]$,
$R(yz)h=R(y)[R(z)h]$ for all $x,y\in
G$ and for all function $h:$ $G\longrightarrow \mathbb{C}$.\\
Thus,  inequality (\ref{eq21}) can be written as follows
\begin{equation}\label{eq288}\|
[R(y)+\chi(y)L(y)]f-2g(y)f\|_{\infty}\leq \delta\end{equation} for
all $y\in G$. Applying the bounded operator $R(z)+\chi(z)L(z)$ to
the bounded function $R(y)+\chi(y)L(y)f-2g(y)f$ we get after
reduction that
\begin{equation}\label{eq211}
\|
(R(z)+\chi(z)L(z))[R(y)+\chi(y)L(y)]f-2g(y)(R(z)+\chi(z)L(z))f\parallel_{\infty}\end{equation}
$$=\|
[(R(zy)+\chi(zy)L(zy))f-2g(zy)f+2g(zy)f+\chi(y)R(z)L(z)+\chi(z)L(z)R(y)]f$$$$-2g(y)(R(z)+\chi(z)L(z)f-2g(z)f)-4g(z)g(y)f\|_{\infty}.$$
By using (\ref{eq288}), $\|(R(z)+\chi(z)L(z))h\|_{\infty}\leq2
\|h\|_{\infty}$ for all complex bounded function $h$ on $G$ we
obtain
\begin{equation}\label{eq212}
\|
2g(zy)f+[\chi(y)R(z)L(z)+\chi(z)L(z)R(y)]f-4g(z)g(y)f\|_{\infty}\leq3\delta+2|g(y)|\delta\end{equation}
for all $z,y\in G$. \\(1) (i) Interchanging $z$ and $y$ in
(\ref{eq212}) and substracting the result obtained from
(\ref{eq212})  we get
$$\|[g(zy)-g(yz)]f\|_\infty\leq 2|g(z)|\delta+2|g(y)|\delta+6\delta.$$
Since $f$ is assumed to be unbounded, then $g$ is central.
\\Setting $y=z$ in (\ref{eq212}), we obtain
\begin{equation}\label{eq213}
\|(2g(y)^{2}-g(y^{2}))f-\chi(y)R(y)L(y)f\|_\infty\leq |
g(y)|\delta+\frac{3}{2}\delta.
\end{equation}That is,
\begin{equation}\label{eq214}
|(2g(y)^{2}-g(y^{2}))f(x)-\chi(y)f(\sigma(y)xy)|\leq |
g(y)|\delta+\frac{3}{2}\delta
\end{equation} for all $x,y\in G$. Which implies that
\begin{equation}\label{eq01}
\|m_{g}(y)f-\chi(y)\mu(y)f\|_{\infty}\leq2|
g(y)|\delta+\frac{1}{2}\delta, \end{equation} where
$[\mu(y)h](x)=h(\sigma(y)xy).$ Noting that $\mu(yz)=\mu(y)\mu(z)$
for all $z,y\in G$. By using inequality (\ref{eq214}) we have
$$|m_g(yz)f(x)-\chi(y)\chi(z)f(\sigma(z)\sigma(y)xyz)|\leq|g(yz)|\delta+\frac{3}{2}\delta,$$
$$|m_g(y)f(x)-\chi(y)f(\sigma(y)xy)|\leq|g(y)|\delta+\frac{3}{2}\delta,$$
and
 $$|m_g(z)f(\sigma(y)xy)-\chi(z)f(\sigma(z)\sigma(y)xyz)|\leq|g(z)|\delta+\frac{3}{2}\delta.$$ So, by using triangle inequality we get
 $$|m_g(yz)f(x)-m_g(y)m_g(z)f(x)|\leq |m_g(yz)f(x)-\chi(y)\chi(z)f(\sigma(z)\sigma(y)xyz)|$$
 $$+|\chi(y)\chi(z)f(\sigma(z)\sigma(y)xyz)-m_g(y)m_g(z)f(x)|$$
 $$\leq|m_g(yz)f(x)-\chi(y)\chi(z)f(\sigma(z)\sigma(y)xyz)|$$
 $$+|m_g(z)\chi(y)f(\sigma(y)xy)-m_g(y)m_g(z)f(x)|+|\chi(y)\chi(z)f(\sigma(z)\sigma(y)xyz)-m_g(z)\chi(y)f(\sigma(y)xy)|$$
 $$\leq |g(yz)|\delta+\frac{3}{2}\delta+|m_g(z)||\chi(y)f(\sigma(y)xy)-m_g(y)f(x)|$$$$+|\chi(y)||\chi(z)f(\sigma(z)\sigma(y)xyz)-m_g(z)f(\sigma(y)xy)|$$
 $$\leq|g(yz)|\delta+\frac{3}{2}\delta+|m_g(z)|[|g(y)|\delta+\frac{3}{2}\delta]+|\chi(y)|[|g(z)|\delta+\frac{3}{2}\delta].$$ From the assumption that $f$ is unbounded we get
$m_{g}(yz)=m_g(y)m_g(z)$ for all $y,z\in G$. On the other hand if
$m_g=0$, then if we put $y=e$ in  (\ref{eq214})  we obtain $f$
bounded, since
$f$ is unbounded, so  $m_g(G)\subseteq\mathbb{C}^{\ast}$. \\
(ii) Now, let $a\in G$ be arbitrary. First case: Assume that either
$f(a)\neq 0$ or $f(e)\neq 0$. The pair $(f,g$) satisfies inequality
(\ref{eq21}) on the abelian subgroup $<a>$ generated by $a$, then on
the abelian subgroup $<a>$ we have
$|f(x\sigma(y))+\chi(\sigma(y))f(xy)-2f(x)g(\sigma(y))|\leq\delta$,
since $\chi$ is unitary and $\chi(y\sigma(y))=1$ hence we get
\begin{equation}\label{eq215}
|\chi(y)f(x\sigma(y))+f(xy)-2f(x)\chi(y)g(\sigma(y))|\leq\delta,\;x,y\in
G.\end{equation} By substituting (\ref{eq21}) to ($\ref{eq215}$) on
the commutative subgroup $<a>$ we obtain
$|f(x)[g(y)-\chi(y)g(\sigma(y))]|\leq2\delta$ for all $x,y\in G$.
Since $f$ is unbounded, then we have $g(y)=\chi(y)g(\sigma(y))$ for
all $y\in <a>.$ In particular $g(a)=\chi(a)g(\sigma(a))$
\\Second case: Assume that $f(a)= 0$ and $f(e)=0$. Setting $x=e$
in (\ref{eq21}), we obtain
\begin{equation}\label{eq216}
|f(y)+\chi(y)f(\sigma(y))|\leq\delta\end{equation} for all $y\in G$.
Thus, from $\chi(a\sigma(a))=1$,  $2f(x)[g(a)-\chi(a)g(\sigma(a))]$
can be written as follows
$$|2f(x)[g(a)-\chi(a)g(\sigma(a))]=2f(x)g(a)-f(xa)-\chi(a)f(\sigma(a)x)$$$$
+\chi(a)[f(x\sigma(a))+\chi(\sigma(a))f(ax)-2f(x)g(\sigma(a))]$$$$+f(xa)+\chi(a)f(\sigma(a)x)-\chi(a)f(x\sigma(a))-f(ax).$$
Since
$$f(xa)+\chi(a)f(\sigma(a)x)-\chi(a)f(x\sigma(a))-f(ax)$$$$=f(xa)+\chi(a)f(\sigma(a)x)-\chi(a)[f(x\sigma(a))+\chi(x\sigma(a))f(a\sigma(x))]+\chi(x)f(a\sigma(x))$$
$$-[f(ax)+\chi(ax)f(\sigma(x)\sigma(a))]+\chi(ax)f(\sigma(x)\sigma(a))$$
$$=\chi(x)[f(a\sigma(x))+\chi(\sigma(x))f(\sigma(\sigma(x))a)-2f(a)g(\sigma(x))]+2\chi(x)f(a)g(\sigma(x))
$$$$+\chi(a)[f(\sigma(a)x)+\chi(x)f(\sigma(x)\sigma(a))-2f(\sigma(a))g(x)]+2\chi(a)f(\sigma(a))g(x)$$$$-\chi(a)[f(x\sigma(a))+\chi(x\sigma(a))f(a\sigma(x))]
-[f(ax)+\chi(ax)f(\sigma(x)\sigma(a))].$$ From inequalities
(\ref{eq216}), (\ref{eq21}) and $|\chi(x)|=1$ we get
\begin{equation}\label{eq217}
|2f(x)[g(a)-\chi(a)g(\sigma(a))]|\leq6\delta+2|f(\sigma(a))|g(x)|+2|f(a)||g(\sigma(x))|\end{equation}
for all $x\in G$. Here again we discuss two subcases: If $g$ is
bounded, then by using the unboundedness of $f$ and (\ref{eq217}) we
get $g(a)=\chi(a)g(\sigma(a))$. If $g$ is unbounded we use the case
$ii)$ to obtain that $g(x)=\chi(x)g(\sigma(x))$ for all $x\in G$
hence we get the result for $x=a$. On the other hand we have
$m_g(\sigma(y))=2g(\sigma(y))^{2}-g(\sigma(y^{2}))=
2\chi(y)^{2}g(y)^{2}-\chi(\sigma(y))^{2}g(y^{2})=\chi(y)^{2}m_g(y)$.
So if $\sigma(x)=x^{-1}$ for all $x\in G$ then we get
$m_g(x)m_g(x^{-1})=m_g(e)=1=m_g(x)\chi(x^{-1})^{2}m_g(x)=(\chi(x^{-1})m_g(x))^{2}$,
then we get $\check{\chi}m_g(G)\subseteq \{\mp 1\}$.
\\
(iii) The formula $ 2f(x)[g(y)-m_g(y){\check{g}}(y)]$ can be written
as follows
$$2f(x)[g(y)-m_g(y){\check{g}}(y)]=$$$$[-(f(xy)+\chi(y)f(\sigma(y)x))+2f(x)g(y)]$$$$+m_g(y)[f(xy^{-1})+\chi(y^{-1})f(\sigma(y^{-1})x))-2f(x){\check{g}}(y)]$$
$$+f(xy)-\chi(y^{-1})m_g(y)f(\sigma(y^{-1})x)+\chi(y)f(\sigma(y)x)-m_g(y)f(xy^{-1}).$$
Once again we have
$$f(xy)-\chi(y^{-1})m_g(y)f(\sigma(y^{-1})x)+\chi(y)f(\sigma(y)x)-m_g(y)f(xy^{-1})$$
$$=f(\sigma(y)\sigma(y^{-1})xy)-\chi(y^{-1})m_g(y)f(\sigma(y^{-1})x)+\chi(y)f(\sigma(y)xy^{-1}y)-m_g(y)f(xy^{-1})$$
$$=[\mu(y)f](\sigma(y^{-1})x)-\chi(y^{-1})m_g(y)f(\sigma(y^{-1})x)+\chi(y)[\mu(y)f](xy^{-1})-m_g(y)f(xy^{-1})$$
and from inequality (\ref{eq214}) we obtain
\begin{equation}\label{eq218}
|[\mu(y)f](\sigma(y^{-1})x)-\chi(y^{-1})m_g(y)f(\sigma(y^{-1})x))+\chi(y)[\mu(y)f](xy^{-1})-m_g(y)f(xy^{-1})|\end{equation}$$\leq2|g(y)|\delta+3\delta.$$
Now, from inequalities  (\ref{eq21}) and (\ref{eq218}) we get
\begin{equation}\label{eq3000}
|2f(x)[g(y)-m_g(y){\check{g}}(y)]|\leq
|m_g(y)|\delta+2|g(y)|\delta+4\delta.\end{equation}
 Since $f$ is
unbounded then we have $g(y)=m_g(y){\check{g}}(y)$ for all $y\in G.$
\\(vi) Let us consider
\begin{equation}\label{eq219}
2f(x)[g(zy)+m_g(y)g(zy^{-1})-2g(y)g(z)]\end{equation}

$$=[2g(zy)f(x)-4g(y)g(z)f(x)+\chi(y)(R(z)L(y)f)(x)+\chi(z)(L(z)R(y)f)(x)]$$$$+2f(x)m_g(y)g(zy^{-1})-\chi(y)(R(z)L(y)f)(x)-\chi(z)(L(z)R(y)f)(x).$$
Since
$$2f(x)m_g(y)g(zy^{-1})-\chi(y)(R(z)L(y)f)(x)-\chi(z)(L(z)R(y)f)(x)$$
$$=2f(x)m_g(y)g(zy^{-1})-\chi(y)f(\sigma(y)xzy^{-1}y)-\chi(z)f(\sigma(y)\sigma(y^{-1})\sigma(z)xy)$$
$$=2f(x)m_g(y)g(zy^{-1})-\chi(y)f(\sigma(y)xzy^{-1}y)+m_g(y)f(xzy^{-1})-m_g(y)f(xzy^{-1})$$
$$-\chi(z)f(\sigma(y)\sigma(y^{-1})\sigma(z)xy)+\chi(y^{-1})\chi(z)m_g(y)f(\sigma(y^{-1})\sigma(z)x))$$
$$-\chi(y^{-1})\chi(z)m_g(y)f(\sigma(y^{-1})\sigma(z)x))$$
$$=-\chi(y)[\mu(y)f](xzy^{-1})+m_g(y)f(xzy^{-1})$$$$-\chi(z)[\mu(y)f](\sigma(y^{-1})\sigma(z)x)+\chi(y^{-1})m_g(y)f(\sigma(y^{-1})\sigma(z)x))$$
$$-m_g(y)[f(xzy^{-1})+\chi(zy^{-1})f(\sigma(y^{-1})\sigma(z)x)-2f(x)g(zy^{-1})].$$
From inequalities (\ref{eq21}), (\ref{eq214}), (\ref{eq215}) and
(\ref{eq219}) we obtain
$$|2f(x)[g(zy)+m_g(y)g(zy^{-1})-2g(y)g(z)]|\leq 6\delta+4|g(y)|\delta+|m_g(y)|\delta.$$
Since $f$ is unbounded, then $g$ satisfies  equation
$g(xy)+m_g(y)g(xy^{-1})=2g(x)g(y),\;x,y\in G.$ \\(2)  We assume that
$g$ is unbounded and $f\neq0$. By simple computations we get $f$
unbounded. Now, for all $x,y,z\in G$ we have
$$|2g(z)||f(xy)+\chi(y)f(\sigma(y)x)-2f(x)g(y)|$$
$$=|2f(xy)g(z)+2\chi(y)f(\sigma(y)x)g(z)-4f(x)g(y)g(z)|$$
$$\leq|-f(xyz)-\chi(z)f(\sigma(z)xy)+2f(xy)g(z)|$$
$$+|\chi(y)[-f(\sigma(y)xz)-\chi(z)f(\sigma(z)\sigma(y)x)+2f(\sigma(y)x)g(z)]|$$
$$+|+f(xyz)+\chi(yz)f(\sigma(z)\sigma(y)x)-2f(x)g(yz)|$$
$$+|-f(xzy)-\chi(zy)f(\sigma(y)\sigma(z)x)+2f(x)g(zy)|$$
$$+|\chi(z)f(\sigma(z)xy)+\chi(zy)f(\sigma(y)\sigma(z)x)-2\chi(z)f(\sigma(z)x)g(y)|$$
$$+|f(xzy)+\chi(y)f(\sigma(y)xz)-2f(xz)g(y)|$$
$$+2|f(x)||g(yz)-g(zy)|+2g(y)|f(xz)+\chi(z)f(\sigma(z)x)-2f(x)g(z)|.$$
$$\leq\delta+\delta+\delta+\delta+|\chi(z)|\delta+0\times  2|f(x)|\delta+2|g(y)|\delta=6\delta+2|g(y)|\delta$$
Using that $g$ is unbounded, we get the desired result that the pair
($f,g$) satisfies the functional equation (\ref{eq16}). Now, by
using (\ref{eq214}) with $\delta=0$ we get
\begin{equation}\label{eq90}
m_g(y)f(x)=\chi(y)f(\sigma(y)xy)\end{equation}  for all $x,y\in G.$
So if we replace $x$ by $xy^{-1}$ in(\ref{eq90}) we obtain
$m_g(y)f(xy^{-1})=\chi(y)f(\sigma(y)x)$ and equation (\ref{eq16})
can be written as follows $f(xy)+m_g(y)f(xy^{-1})=2f(x)g(y)$ for all
$x,y\in G.$ For the proof of other properties we use case (1) with
$\delta=0.$ This completes the proof of theorem. \end{proof}As an
application we get some properties of the solutions of equation
(\ref{eq16}), where $\sigma$ is an involutive anti-automorphism. By
using the above Theorem for $\delta=0$, [\cite{eb1}, Proposition 2,
Theorem 3 and Corollary 6], [\cite{st2}, Theorem 6.1, Theorem 10.1 and Corollary 10.2] we obtain the following theorem.\\
For later use, we recall (see for example \cite{eb1}) that a
function $f$: $G\longrightarrow \mathbb{C}$ is said to be abelian,
if $f(x_{\sigma(1)}x_{\sigma(2)}...x_{\sigma(n)})=f(x_1x_2...x_n)$
for all $x_1,x_2,...,x_n\in G$, all permutations $\sigma$ and all
$n=1,2,...$.
\begin{thm}Let the pair $f,g$: $G\longrightarrow \mathbb{C}$ be a  solution of the variant (\ref{eq16}) of Wilson's functional equation such that $f\neq 0$.\\
\textbf{(1)} If $f$ is a nonzero central function. Then, \\(i)
$f=f(e)g$, when $g$ is non abelian.\\(ii) When $g$ is abelian $g$
has the form $g=\frac{\psi+\chi\psi\circ\sigma}{2}$ where $\psi:$
$G\longrightarrow \mathbb{C}^{\ast}$ is a character. If
$\psi\neq\chi\psi\circ\sigma$, then
$f=\alpha(\psi-\chi\psi\circ\sigma)/2+\beta(\psi+\chi\omega\circ\sigma)/2$
for some $\alpha,\beta\in \mathbb{C}$. If
$\psi=\chi\psi\circ\sigma$, then $f=\psi a+\beta\psi$ for some
additive map $a$: $G\longrightarrow \mathbb{C}$ and some $\beta\in
\mathbb{C}$.
  \\\textbf{(2)} (i) $g(e)=1$, $g$ is central, and
$g=\chi g\circ\sigma$, $g=m_g \check{g}$.\\
(ii) $m_g$: $G\longrightarrow \mathbb{C}^{\ast}$ is multiplicative.\\
(iii) $\chi(y)f(\sigma(y)xy)=m_g(y)f(x)$, $\chi(y)f(\sigma(y)x)=m_g(y)f(xy^{-1})$ for all $x,y\in G.$\\
(iv) $g(xy)+m_g(xy^{-1})=2g(x)g(y)$ for all $x,y\in G.$\\
(v) $f(xy)+m_g(y)f(xy^{-1})=2f(x)g(y)$ for all $x,y\in G.$\\
(vi) $f=-\chi f\circ\sigma$ if and only if $f(e)=0$\\
(vii) The even part and the odd part of $f$:
$f_e(x)=\frac{f(x)+\chi(x)f(\sigma(x))}{2}$,
$f_o(x)=\frac{f(x)-\chi(x)f(\sigma(x))}{2}$ and $\chi f\circ\sigma$
satisfy
(\ref{eq16}) with $g$ unchanged.\\
(ix) $f_e=f(e)g$. In particular $f_e=0$ (f is odd) if and only if $f(e)=0$  \\
(x) The odd part $f_o$ of $f$ satisfies
\begin{equation}\label{eq30000}
f_o(xy)+f_o(yx)=2f_o(x)g(y)+2f_o(y)g(x)
\end{equation} for all $x,y\in G.$.\\
\textbf{(3)} For the rest we assume that $\sigma(x)=x^{-1}$ for all
$x\in
G.$\\
(i)  $\check{\chi} m_{g}(G)\subseteq \{\mp 1\}$\\
 If $m_g=\chi$,
then either\\
(i) $g$ is non-abelian and $f=f(e)g$, or \\
(iii) $f$ and $g$ are both abelian, in which case (1) applies.\\
\textbf{(4)} If $m_g\neq\chi$, then $f=-\chi f\circ\sigma$ (f is
odd).
\end{thm}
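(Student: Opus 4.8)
The plan is to derive everything from Theorem~2.1 applied with $\delta=0$. When $\delta=0$ the inequality (\ref{eq21}) is the exact equation (\ref{eq16}), and every estimate in the proof of Theorem~2.1 of the form $\|(\cdots)f\|_\infty\le C\delta$ degenerates to $(\cdots)f\equiv 0$; since $f\not\equiv 0$ the bracketed coefficient must vanish, so the hypothesis ``$f$ unbounded'' there may be weakened to ``$f\ne0$''. Running the proof in this way yields at once: $g$ is central; $m_g\colon G\to\mathbb{C}^{\ast}$ is multiplicative with $m_g(e)=1$; $g=\chi\,g\circ\sigma$; $g=m_g\check g$; the d'Alembert-type equation (\ref{eq27}) for $g$; and the identity $\chi(y)f(\sigma(y)xy)=m_g(y)f(x)$, whence (substituting $x\mapsto xy^{-1}$) $\chi(y)f(\sigma(y)x)=m_g(y)f(xy^{-1})$ and therefore, by (\ref{eq16}), the Wilson-type equation (\ref{eq28}) for $(f,g)$. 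Putting $x=y=e$ in (\ref{eq27}) forces $g(e)\in\{0,1\}$, and $g(e)=0$ would give $f\equiv0$ via (\ref{eq28}) at $y=e$; hence $g(e)=1$. This is the list (2)(i)--(v).

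I would next handle the even/odd decomposition. Setting $x=e$ in (\ref{eq16}) gives $f+\chi\,f\circ\sigma=2f(e)g$, i.e.\ $f_e=f(e)g$, which is (2)(ix); since $g(e)=1$ (so $g\ne0$) this also shows $f(e)=0\iff f_e=0\iff f=-\chi\,f\circ\sigma$, which is (2)(vi). Replacing $x$ by $\sigma(x)$ in (\ref{eq16}) and using $\chi(x\sigma(x))=1$ shows directly that $\chi\,f\circ\sigma$ solves (\ref{eq16}) with the same $g$; by linearity so do $f_e$ and $f_o$, which is (2)(vii). For (2)(x): using $\sigma(y)x=\sigma(\sigma(x)y)$ and the oddness identity $\chi(x)f_o(\sigma(x))=-f_o(x)$, one rewrites the term $\chi(y)f_o(\sigma(y)x)$ in the equation (\ref{eq16}) for $f_o$ as $-\chi(x)f_o(\sigma(x)y)$; adding the copy with $x,y$ interchanged makes those two terms cancel and leaves $f_o(xy)+f_o(yx)=2f_o(x)g(y)+2f_o(y)g(x)$.

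For the explicit descriptions in (1) I would invoke the structure theory of Wilson's equation. If $f$ is central and nonzero, (\ref{eq28}) is Wilson's equation $f(xy)+m_g(y)f(xy^{-1})=2f(x)g(y)$ with multiplicative weight $m_g$ and central $f$, so \cite[Proposition~2, Theorem~3, Corollary~6]{eb1} applies: when $g$ is non-abelian it forces $f=f(e)g$, and when $g$ is abelian it gives, together with $g=\chi\,g\circ\sigma$ and $g=m_g\check g$, that $g=(\psi+\chi\,\psi\circ\sigma)/2$ for a character $\psi\colon G\to\mathbb{C}^{\ast}$, with $f$ taking one of the two displayed forms according to whether $\psi\ne\chi\,\psi\circ\sigma$ (then $\psi$ and $\chi\,\psi\circ\sigma$ are independent and $f$ is a combination of $(\psi-\chi\,\psi\circ\sigma)/2$ and $(\psi+\chi\,\psi\circ\sigma)/2$) or $\psi=\chi\,\psi\circ\sigma$ (then $f=\psi a+\beta\psi$ with $a$ additive). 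This gives all of (1).

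Finally, assume $\sigma(x)=x^{-1}$. From $m_g(\sigma(y))=\chi(y)^2m_g(y)$ (computed exactly as in the proof of Theorem~2.1) and $m_g(y)m_g(y^{-1})=m_g(e)=1$ one gets $(\check\chi\,m_g)^2\equiv1$, i.e.\ $\check\chi\,m_g(G)\subseteq\{\mp1\}$; this is (3)(i), and it makes $\check\chi\,m_g$ a $\{\pm1\}$-valued character, so either $m_g=\chi$ or there is a $y_0$ with $m_g(y_0)=-\chi(y_0)$. In the first case (\ref{eq28}) reads $f(xy)+\chi(y)f(xy^{-1})=2f(x)g(y)$ with $g$ a $\chi$-d'Alembert function by (\ref{eq27}), and \cite[Theorem~6.1, Theorem~10.1, Corollary~10.2]{st2} together with part (1) give the stated alternative ($g$ non-abelian with $f=f(e)g$, or $f$ and $g$ both abelian). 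In the second case, subtracting $g=\chi\,\check g$ and $g=m_g\check g$ at $y_0$ forces $g(y_0^{-1})=0$, hence $g(y_0)=m_g(y_0)g(y_0^{-1})=0$; evaluating (\ref{eq16}) and (\ref{eq28}) at $x=y=y_0$ then gives $f(y_0^2)=-\chi(y_0)f(e)$ and $f(y_0^2)=\chi(y_0)f(e)$, so $f(e)=0$ and hence $f=-\chi\,f\circ\sigma$ by (2)(vi); this is (4). The main obstacle is the two appeals to the Ebanks--Stetk\ae r and Stetk\ae r classifications in (1) and (3): one must check that their hypotheses (a multiplicative weight, and centrality/abelianness of the unknowns) match the data handed over by Theorem~2.1, and keep track of the precise sub-cases; everything else is bookkeeping with $\delta=0$.
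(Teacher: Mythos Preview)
Your proposal is correct and follows essentially the same route as the paper: the paper's entire proof consists of the sentence ``By using the above Theorem for $\delta=0$, [\cite{eb1}, Proposition~2, Theorem~3 and Corollary~6], [\cite{st2}, Theorem~6.1, Theorem~10.1 and Corollary~10.2] we obtain the following theorem,'' and you carry out exactly that program. Where you go further is in supplying the explicit bookkeeping for items (2)(vi)--(x) and the direct argument for (4) (choosing $y_0$ with $m_g(y_0)=-\chi(y_0)$, forcing $g(y_0)=0$, and comparing (\ref{eq16}) with (\ref{eq28}) at $x=y=y_0$ to get $f(e)=0$); the paper leaves all of this to the cited references, so your write-up is in fact more self-contained than the original.
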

\section{Solutions and Stability of the functional equation
(\ref{eq16}), where $\sigma$ is an involutive homomrphism of $G$} In
this section $\sigma$ is an involutive  homomorphism of $G$, that is
$\sigma(xy)=\sigma(x)\sigma(y)$ and $\sigma(\sigma(x))=x$, for all
$x,y\in G.$ In the following theorem, we obtain the solutions of the
functional  equation (\ref{eq16}) on semigroups with  identity
element. It turns out that, like on abelian groups, only
multiplicative and additive functions occur in the solution
formulas.
\begin{thm}Let $G$ be a semigroup with  identity element,  $\sigma$: $G\longrightarrow G$ a multiplicative function such that $\sigma\circ\sigma=I$,
 where $I$ denotes the identity map, and $\chi$: $G\longrightarrow \mathbb{C}$ be a character of $G$ such that $\chi(x\sigma(x))=1$ for all
$x\in G$. \\
The solutions $f,g$ of the functional equation (\ref{eq16}) are the
following pairs of functions, where $m$: $G\longrightarrow
\mathbb{C}$ denotes a function multiplicative and $c\in \mathbb{C}^{\ast}$.\\
(i) $f=0$ and $g$ arbitrary.\\
(ii) $g=\frac{m+\chi m\circ\sigma}{2}$ and $f=f(e)g$. \\
(iii) $g=\frac{m+\chi m\circ\sigma}{2}$ and
$f=(c+\frac{f(e)}{2})m-(c-\frac{f(e)}{2})m\circ\sigma$ with $(\chi-1)m=(\chi-1)m\circ\sigma.$ \\
(iv) $g=m$ and $f=(a+f(e))m$, where $m=\chi m\circ\sigma$ and $a$:
$G\longrightarrow \mathbb{C}$ is an additive map which satisfies
$m(a\circ\sigma+a)=0$.
\end{thm}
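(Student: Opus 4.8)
The plan is to solve the functional equation $(E)$ by reducing it, via the hypotheses on $\sigma$ and $\chi$, to two classical situations: Wilson's equation on the commutative quotient where $g$ is a classical solution, and the d'Alembert--type behaviour that forces $g=\frac{m+\chi m\circ\sigma}{2}$ or $g=m$ with $m$ multiplicative. First I would dispose of the trivial case $f=0$, which gives (i) with $g$ arbitrary. So assume $f\neq 0$. Setting $x=e$ in $(E)$ gives $f(y)+\chi(y)f(\sigma(y))=2f(e)g(y)$, i.e.\ $g$ is determined by the even part $f_e(y):=\tfrac12\bigl(f(y)+\chi(y)f(\sigma(y))\bigr)$ as $f_e=f(e)g$; in particular $g(e)=1$ once $f(e)\neq0$, and one handles $f(e)=0$ separately (then $f$ is ``odd'', $f=-\chi f\circ\sigma$). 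Setting $y=e$ gives $2f(x)=2f(x)g(e)$, so $g(e)=1$ whenever $f\neq0$.

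The key step is to show that $g$ itself satisfies the scalar variant of d'Alembert's equation and hence has the stated form. Here I would mimic the operator computation from the proof of Theorem~2.1 (the $L,R$ formalism), but now with genuine equalities rather than inequalities: applying $R(z)+\chi(z)L(z)$ to $(E)$ and using associativity of $L,R$ together with $L(x)R(y)=R(y)L(x)$, one obtains, after the same reduction that produced \eqref{eq212} with $\delta=0$, the identity
$$
2g(zy)f(x)+\chi(y)f(\sigma(y)\sigma(z)xz)+\chi(z)f(\sigma(z)xyz)=4g(z)g(y)f(x),
$$
wait --- since $\sigma$ is now a \emph{homomorphism}, the cross terms simplify differently from the anti-automorphism case, and symmetrising in $y,z$ shows $g$ is central, while putting $z=y$ produces $m_g(y)f(x)=\chi(y)[\mu(y)f](x)$ with $m_g=2g^2-g\circ(\cdot)^2$ and $\mu(y)f(x)=f(\sigma(y)xy)$; then the argument that $m_g$ is multiplicative and that $g$ satisfies $g(xy)+m_g(y)g(xy^{-1})=2g(x)g(y)$ goes through verbatim with $\delta=0$. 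On a semigroup with identity (no inverses available), I would instead argue more directly: because $\sigma$ is multiplicative, restricting $(E)$ to the sub\emph{semigroup} generated by a single element $a$ makes everything commutative, and on a commutative semigroup the solutions of $f(xy)+\chi(y)f(\sigma(y)x)=2f(x)g(y)$ with $\chi(x\sigma(x))=1$ are classically known to force $g=\frac{m+\chi m\circ\sigma}{2}$ for some multiplicative $m$ (this is the Wilson--type result, cf.\ the Acz\'el reference and \cite{eb1,st8}). One then extends $m$ globally using that $g$ is central and $g(e)=1$.

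Once $g=\frac{m+\chi m\circ\sigma}{2}$ is established, I would split according to whether $m\neq \chi m\circ\sigma$ or $m=\chi m\circ\sigma$. In the first case, plugging the form of $g$ back into $(E)$ and using that $m,m\circ\sigma$ are independent multiplicative functions, one expands $f$ in the ``basis'' $\{m,m\circ\sigma\}$ plus a correction and matches coefficients: writing $f=\alpha m+\beta m\circ\sigma$ and imposing $(E)$ forces the relation $(\chi-1)m=(\chi-1)m\circ\sigma$ together with $\alpha+\beta=f(e)$ and $\alpha-\beta=2c$ for a free constant $c\in\mathbb C^{\ast}$, giving (iii); the degenerate sub-case $c=0$ (equivalently $f=f(e)g$) is exactly (ii), which also covers $m$ non-abelian. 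In the second case $m=\chi m\circ\sigma$ we have $g=m$, and substituting $f=m\cdot h$ into $(E)$ and cancelling $m$ reduces $(E)$ to the \emph{additive} Cauchy equation for $h$, i.e.\ $h(xy)+h(yx)=2h(x)$ type relations collapsing (by centrality) to $h$ additive; tracking the constant of integration gives $f=(a+f(e))m$, and the compatibility condition $f=\chi f\circ\sigma$ forced by the $x=e$ substitution translates into $m(a\circ\sigma+a)=0$. That yields (iv).

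The main obstacle I anticipate is the semigroup setting: the clean operator manipulations in Theorem~2.1 use inverses freely (they live on a group), so the reduction ``$g$ satisfies the d'Alembert variant'' must be re-derived without cancelling or inverting elements. I expect to handle this by the localisation trick --- work first inside each singly-generated commutative subsemigroup, invoke the known commutative solution theory to pin down the \emph{shape} of $g$ and $f$ there, and then globalise using centrality of $g$, the multiplicativity that the shape already guarantees, and the identity element. Verifying that the locally-defined multiplicative $m$ and additive $a$ glue into globally well-defined functions, and that no extra solution families appear when $\sigma$ has fixed points other than $e$, will be the delicate bookkeeping; everything after $g$'s form is known is essentially linear algebra over the function-space spanned by $m$ and $m\circ\sigma$.
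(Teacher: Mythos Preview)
Your proposal has a genuine gap at its core step. You correctly identify that the operator method from Theorem~2.1 needs inverses and does not transfer to a semigroup, but your proposed workaround --- localise to singly-generated commutative subsemigroups, invoke the commutative theory there, then ``glue'' the local multiplicative functions $m$ into a global one --- is where the argument breaks down. There is no mechanism offered for why the locally-defined $m$'s on $\langle a\rangle$ and $\langle b\rangle$ should agree on overlaps or extend consistently to products $ab$; centrality of $g$ alone does not do this, since $g=\tfrac{m+\chi m\circ\sigma}{2}$ does not determine $m$ uniquely (you can swap $m\leftrightarrow \chi m\circ\sigma$), and nothing forces a coherent global choice. A second, related gap: in case~(iii) you simply posit the ansatz $f=\alpha m+\beta\,m\circ\sigma$ and ``match coefficients''. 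But you have not shown that $f$ lies in the span of $m$ and $m\circ\sigma$; this is exactly the content that needs proof.

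The paper avoids both difficulties by a different route that you are missing. Rather than first pinning down $g$ and then $f$, it derives directly --- by two substitutions in $(E)$ ($x\mapsto xy,\,y\mapsto z$ and $x\mapsto\sigma(z)x$) and using only $\sigma(yz)=\sigma(y)\sigma(z)$ and $\chi(z\sigma(z))=1$, no inverses --- the identity
\[
f(xyz)=g(yz)f(x)+f(xy)g(z)+g(y)f(xz)-2g(y)g(z)f(x),
\]
which, with $f_a(y):=f(ay)-f(a)g(y)$, is exactly the sine addition law $f_a(yz)=f_a(y)g(z)+f_a(z)g(y)$ for every $a\in G$. Now the known classification of the sine addition formula on semigroups (Stetk\ae r, \cite[Theorem~4.1]{st3}) hands you, globally and at once, that either $f_a\equiv0$ (forcing $f=f(e)g$ and then $g$ satisfies \eqref{eq35}, whose solutions are $\tfrac{m+\chi m\circ\sigma}{2}$ by an easy sub-lemma), or $g=\tfrac{m+M}{2}$ and $f_e=c(m-M)$ for global multiplicative $m,M$, or $g=m$ and $f_e=am$ with $a$ additive. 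Substituting each form back into $(E)$ then forces $M=\chi m\circ\sigma$ in the second case and $m=\chi m\circ\sigma$, $m(a+a\circ\sigma)=0$ in the third. This is the missing key lemma; once you have it, the linear-algebra endgame you sketch is correct, but it is the sine-addition reduction that both justifies the ansatz for $f$ and produces a global $m$ without any gluing.
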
\begin{proof} It is elementary to check that the cases stated in the Theorem define solutions, so it is left to show that any solution $f,g$: $G\longrightarrow \mathbb{C}$
 of (\ref{eq16}) falls into one of these cases. We use in the proof similar Stetk\ae r's computations  \cite{st4} . Let $x,y,z\in
G$. If we replace $x$ by $xy$ and $y$ by $z$ in (\ref{eq16}) we get
\begin{equation}\label{eq31}
f(xyz)+\chi(z)f(\sigma(z)xy)=2f(xy)g(z).
\end{equation}
On the other hand if we replace $x$ by $\sigma(z)x$ in (\ref{eq16}),
we obtain
$$f(\sigma(z)xy)+\chi(y)f(\sigma(y)\sigma(z)x)=2f(\sigma(z)x)g(y)$$
$$=2g(y)[\chi(\sigma(z))[2f(x)g(z)-f(xz)]].$$
Since,
$$\chi(y)f(\sigma(y)\sigma(z)x)=\chi(y)f(\sigma(yz)x)=\chi(y)\chi(\sigma(yz))[2g(yz)f(x)-f(xyz)]$$
$$=\chi(\sigma(z))[2g(yz)f(x)-f(xyz)],$$ so by using $\chi(z\sigma(z))=1$ we have
 \begin{equation}\label{eq32}
 \chi(z)f(\sigma(z)xy)+[2g(yz)f(x)-f(xyz)]=2g(y)[2f(x)g(z)-f(xz).\end{equation}
 Subtracting this from (\ref{eq31}) we get
\begin{equation}\label{eq3333}
    f(xyz)=g(yz)f(x)+f(xy)g(z)+g(y)f(xz)-2g(y)f(x)g(z).
\end{equation} With the notation
\begin{equation}\label{eq331}
f_{x}(y)=f(xy)-f(x)g(y)\end{equation}
 equation (\ref{eq3333}) can be
written as follows
\begin{equation}\label{eq34}
f_{a}(xy)=f_{a}(x)g(y)+f_{a}(y)g(x),\; x,y\in G.
\end{equation}We will in the rest of the proof of Theorem 3.1 need to know the solutions of the
functional equation
\begin{equation}\label{eq35}
f(xy)+\chi(y)f(\sigma(y)x)=2f(x)f(y);\;x,y\in G.
\end{equation} They are obtained in the following lemma. \begin{lem} Let $G$ be a semigroup with  identity element,
$\sigma$: $G\longrightarrow G$ a multiplicative function such that
$\sigma\circ\sigma=I$,
 where $I$ denotes the identity map, and $\chi$: $G\longrightarrow \mathbb{C}$ be a character of $G$ such that $\chi(x\sigma(x))=1$ for all
$x\in G$.  The solutions $f$ of the functional equation (\ref{eq35})
are of the form $f=\frac{m+\chi m\circ\sigma}{2}$, where $m$:
$G\longrightarrow \mathbb{C}$ is multiplicative.\end{lem}
\begin{proof} Verifying that  $f=\frac{m+\chi m\circ\sigma}{2}$, where $m$:
$G\longrightarrow \mathbb{C}$ is multiplicative, is solution of
equation (\ref{eq35}) consists in simple computations. Let $f$
satisfies the functional equation (\ref{eq35}), then by using the
above computations the pair $f,f_a$ satisfies equation
\begin{equation}\label{eq36}
    f_{a}(xy)=f_{a}(x)f(y)+f_{a}(y)f(x),\; x,y\in G.
\end{equation}
If $f_a=0$ for all $a\in G$ then $f$ is multiplicative. Substituting
$f$ in (\ref{eq35}) we get $\chi(y)f(\sigma(y))=f(y)$ for all $y\in
G$. This implies that $f=\frac{\varphi+\chi\varphi}{2}$, where
$f=\varphi$ is
multiplicative.\\
If $f_a\neq 0$ for some $a\in G$. From the known solution of the
sine addition formula (see for example [\cite{st3}, Theorem 4.1])
there exist two multiplicative functions $\chi_1,\chi_2$:
$G\longrightarrow \mathbf{C}$ such that $f=\frac{\chi_1+\chi_2}{2}$.
We can assume that $\chi_1\neq\chi_2$. Substituting
$f=\frac{\chi_1+\chi_2}{2}$ in (\ref{eq35}) we get after reduction
that
$$\chi_1(x)[\chi(y)\chi_1(\sigma(y))-\chi_2(y)]=\chi_2(x)[\chi_1(y)-\chi(y)\chi_2(\sigma(y))].$$
Since $\chi_1\neq\chi_2$ at least one of $\chi_1$ and $\chi_2$ is
not zero. So, we get $\chi_1=\chi\chi_2\circ\sigma$, and
$f=\frac{\varphi+\chi\varphi\circ\sigma}{2}$, where $\varphi$:
$G\longrightarrow \mathbb{C}$ is multiplicative. This completely
describes  the solutions of equation (\ref{eq35}).\end{proof} Now,
we will find the solutions of equation (\ref{eq16}). Let $f,g$:
$G\longrightarrow \mathbb{C}$ solution of equation (\ref{eq16}). The
above computation show that the pair $f_a,g$ satisfies the sine
addition formula (\ref{eq34}) for any $a\in G.$ From the known
solution of the sine addition formula (see for example [\cite{st3},
Theorem
4.1]) we have the following possibilities.\\
If $f=0$ we deal with case (i). So during the rest of the proof we
will assume that $f\neq 0$. If we replace $a$ by $e$ in
(\ref{eq331}) we get $f_{e}(x)=f(x)-f(e)g(x)$. If $f_{e}=0$, then
$f(x)=f(e)g(x)$ for all $x\in G$. Since $f\neq0$ then $f(e)\neq0$.
Substituting $f=f(e)g$ into (\ref{eq16}) we find that $g$ satisfies
equation (\ref{eq35}) then there exists $m$: $G\longrightarrow
\mathbb{C}$ multiplicative such that $g=\frac{m+\chi
m\circ\sigma}{2}$. We see that
we deal with case (ii). \\
If $f_e\neq 0$, the pair $(f_e,f)$ satisfies (\ref{eq34}) and  we
known from [\cite{st3}, Theorem 4.1] that there are only the
following
3 possibilities:\\
(1) $f_e=cm$ and $g=m/2$ for some $m$ multiplicative. Here
$f=f_e+f(e)g$. Substituting $f=(c+\frac{f(e)}{2})m$, $g=m/2$ into
(\ref{eq16}) we find $(c+\frac{f(e)}{2})\chi(y)m(x)m(\sigma(y))=0$
for all $x,y\in G.$ This case does not apply, because $f\neq 0.$\\
(2) There exist two different characters $m$ and $M$ on $G$ and a
constant $c\in \mathbb{C}^{\ast}$ such that
$$g=\frac{m+M}{2}\; \text{and}\;f_e=c(m-M)$$
then $f=c(m-M)+f(e)\frac{m+M}{2}=\alpha m-\beta M$, where
$\alpha=c+\frac{f(e)}{2}$ and $\beta=c-\frac{f(e)}{2}$. Substituting
this into (\ref{eq16}) we find after reduction that
\begin{equation}\label{eq37}
    \alpha m(x)(\chi(y)m(\sigma(y))-M(y))=\beta
    M(x)(\chi(y)M(\sigma(y))-m(y)).
\end{equation}
If we replace $y$ by $\sigma(y)$ in (\ref{eq37}) and after we
multiply equation obtained by $\chi(y)$ and using
$\chi(y\sigma(y))=1$ we find
\begin{equation}\label{eq38}
    \alpha m(x)(m(y)-\chi(y)M(\sigma(y)))=\beta
    M(x)(M(y))-\chi(y)m(\sigma(y))).
\end{equation}Subtracting (\ref{eq37}) from  (\ref{eq38}) we get
after some simplifications that
\begin{equation}\label{eq39}
    (\alpha m(x)+\beta M(x))(\chi(y)m(\sigma(y))-M(y))=(\alpha
    m(x)+\beta M(x))(\chi(y)M(\sigma(y))-m(y)).
\end{equation} Putting $x=e$ in (\ref{eq39}) we find that
$\chi(y)m(\sigma(y))-M(y)=\chi(y)M(\sigma(y))-m(y)$, because
$\alpha+\beta=2c\neq0.$ If $\chi M\circ\sigma-m\neq0$, then from
(\ref{eq37})we get $\alpha m(x)=\beta M(x)$ for all $x\in G$. So,
for $x=e$ we obtain $\alpha=\beta$ which contradicts the assumption
that $f(e)\neq0$. Thus,
 $M=\chi m\circ\sigma$ and $m=\chi M\circ\sigma$ from which we see
that $g=\frac{m+\chi m\circ\sigma}{2}$ and
$f=(c+\frac{f(e)}{2})m-(c-\frac{f(e)}{2})m\circ\sigma$. We conclude
that we deal with case (iii).\\
(3)  $g=m$ and $f_e=ma$, where $m$ is multiplicative of $G$ and $a$
is an additive map. From $f_e=f-f(e)g$ we get
$f=ma+f(e)m=(a+f(e))m$. Substituting this into (\ref{eq16}) we find
after reduction that
\begin{equation}\label{eq310}
    m(x)(a(y)m(y)+\chi(y)a(\sigma(y))m(\sigma(y)))+m(x)(a(x)+f(e))(\chi(y)m(\sigma(y))-m(y))=0.
\end{equation}
If we replace $y$ by $\sigma(y)$ in (\ref{eq310})    and after we
multiply equation obtained by $\chi(y)$ and using
$\chi(y\sigma(y))=1$ we find
\begin{equation}\label{eq311}
    m(x)(\chi(y)a(\sigma(y))m(\sigma(y))+a(y)m(y))+m(x)(a(x)+f(e))(m(y)-\chi(y)m(\sigma(y)))=0.
\end{equation}
Subtracting (\ref{eq310}) from  (\ref{eq311}) we get after some
simplifications that
\begin{equation}\label{eq312}
    2m(x)(a(x)+f(e))(\chi(y)m(\sigma(y))-m(y))=0
\end{equation} for all $x,y\in G.$ Putting $x=e$ in (\ref{eq312}) we
get $m=\chi m\circ\sigma$, because
$2m(e)(a(e)+f(e))=2.1.(0+f(e))=2f(e)\neq0.$ This means that
$g=\frac{m+\chi m\circ\sigma}{2}$. Substituting  $m=\chi
m\circ\sigma$ into (\ref{eq310}) we deduce that
$m(a\circ\sigma+a)=0$. We see that we deal with case (iv) and this
completes the proof.
\end{proof} The formulas of
Theorem 3.1 implies  the following corollary.
\begin{cor} Let $G$ be a semigroup with  identity element,  $\sigma$: $G\longrightarrow G$ a multiplicative function such that $\sigma\circ\sigma=I$,
 where $I$ denotes the identity map, and $\chi$: $G\longrightarrow \mathbb{C}$ be a multiplicative function of $G$ such that $\chi(x\sigma(x))=1$ for all
$x\in G$.  If $f; g$ : $G\longrightarrow \mathbb{C}$ is a solution
of variant of Wilson's functional equation (\ref{eq16})  such that
$f\neq 0$, then $g$ is a solution of variant of d'Alembert's
functional equation (\ref{eq35}).\end{cor}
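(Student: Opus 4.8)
The plan is to deduce the statement from the classification of solutions in Theorem 3.1 combined with Lemma 3.2. Let $(f,g)$ be a solution of (\ref{eq16}) with $f\neq 0$. The hypotheses on $\chi$ here are the same as in Theorem 3.1: $\chi$ is multiplicative and $\chi(x\sigma(x))=1$ for all $x\in G$, and since $1\neq 0$ the function $\chi$ cannot vanish identically, whence $\chi(e)=1$ and $\chi$ is a character of $G$. Thus Theorem 3.1 applies, and since $f\neq 0$ excludes case (i), the pair $(f,g)$ falls into one of the cases (ii), (iii), (iv) of that theorem.

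The key observation is that in each of these three cases $g$ has the form $g=\dfrac{m+\chi\, m\circ\sigma}{2}$ for some multiplicative function $m\colon G\to\mathbb C$. In cases (ii) and (iii) this is exactly the stated form of $g$. In case (iv) we have $g=m$ together with the side condition $m=\chi\, m\circ\sigma$, so $g=m=\dfrac{m+\chi\, m\circ\sigma}{2}$ as well. Hence in all surviving cases $g$ is of the d'Alembert form produced by Lemma 3.2.

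By Lemma 3.2, every function of the form $\dfrac{m+\chi\, m\circ\sigma}{2}$ with $m$ multiplicative is a solution of the variant of d'Alembert's equation (\ref{eq35}). Applying this to the $m$ furnished above, we conclude that $g(xy)+\chi(y)g(\sigma(y)x)=2g(x)g(y)$ for all $x,y\in G$, which is precisely the assertion of the corollary.

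The proof is therefore essentially bookkeeping on top of Theorem 3.1; the only mildly delicate point is case (iv), where one must invoke the constraint $m=\chi\, m\circ\sigma$ to rewrite $g=m$ in the symmetric form required to match Lemma 3.2. A self-contained route avoiding the full classification would be to start from the identity (\ref{eq3333}) in the proof of Theorem 3.1, specialize $x$ to a point where $f$ does not vanish, and manipulate the resulting relations to extract the d'Alembert equation for $g$ directly; however, this would in effect reprove the structural content already packaged in Theorem 3.1, so the classification-based argument above is the natural one.
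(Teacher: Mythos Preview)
Your proposal is correct and follows exactly the approach the paper intends: the paper's entire proof is the single line ``The formulas of Theorem 3.1 implies the following corollary,'' and you have simply spelled out how---namely, that in each of the surviving cases (ii), (iii), (iv) of Theorem 3.1 the function $g$ has the form $\frac{m+\chi\, m\circ\sigma}{2}$ (using the constraint $m=\chi\, m\circ\sigma$ in case (iv)), which by Lemma 3.2 is a solution of (\ref{eq35}). Your remark reconciling ``multiplicative function'' with ``character'' via $\chi(x\sigma(x))=1$ is a nice touch the paper glosses over.
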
 In the rest of this
section we examine the Hyers-Ulam stability of the functional
equation (\ref{eq16}). We shall first recall two variants of
Sz\'ekelyhidi results because it will be useful in the treatment of
stability of other functional equations like sine addition formula.
The proof of Theorem 3.3 and Theorem 3.4 goes along  the same lines
as the one in \cite{z1} and   \cite{z2}.
\begin{thm} \cite{z1} Let $V$ be a vector space of $\mathbb{C}$-valued functions on a semigroup $G$, let $V$ be left invariant and suppose that $f$ and $m$ are $\mathbb{C}$-valued functions on $G$.
 If the function $y\longmapsto f(xy)-f(y)m(x)$ belongs to $V$ for all $x\in G$. Then either $f$ is in $V$ or  $m$ is an exponential.  \end{thm}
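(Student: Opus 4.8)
The plan is to produce one algebraic identity that isolates $f$ multiplied by the ``multiplicativity defect'' $m(x)m(y)-m(xy)$ of $m$, and then read off the alternative. Throughout, recall that $V$ being left invariant means: if $h\in V$ and $a\in G$, then the function $z\mapsto h(az)$ again lies in $V$. Write $\phi_x$ for the function $y\mapsto f(xy)-m(x)f(y)$, so that the hypothesis is exactly that $\phi_x\in V$ for every $x\in G$. First I would compute, for arbitrary $x,y,z\in G$, the combination $\phi_{xy}(z)-\phi_x(yz)-m(x)\phi_y(z)$ by expanding each term from the definition $\phi_u(v)=f(uv)-m(u)f(v)$ together with associativity in $G$; the two occurrences of $f(xyz)$ cancel, the two occurrences of $m(x)f(yz)$ cancel, and what survives is precisely $\bigl(m(x)m(y)-m(xy)\bigr)f(z)$. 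Hence
\[
\bigl(m(x)m(y)-m(xy)\bigr)f(z)=\phi_{xy}(z)-\phi_x(yz)-m(x)\phi_y(z),\qquad x,y,z\in G.
\]

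Next I would check that, for each fixed pair $x,y$, the right-hand side is, as a function of $z$, a member of $V$: the term $\phi_{xy}$ lies in $V$ by hypothesis; the term $z\mapsto\phi_x(yz)$ is the left translate of $\phi_x\in V$ by $y$, hence lies in $V$ by left invariance of $V$; and $z\mapsto m(x)\phi_y(z)$ is a scalar multiple of $\phi_y\in V$. Since $V$ is a linear space, the sum belongs to $V$. Therefore $\bigl(m(x)m(y)-m(xy)\bigr)f\in V$ for all $x,y\in G$.

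The dichotomy is then immediate. If $m(x)m(y)=m(xy)$ for all $x,y\in G$, then $m$ is an exponential and we are done. Otherwise there is a pair $x_0,y_0\in G$ with $c:=m(x_0)m(y_0)-m(x_0y_0)\neq 0$, and then $f=c^{-1}\bigl(cf\bigr)\in V$, again because $V$ is a vector space.

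I expect the only step requiring genuine care to be the bookkeeping in the identity of the first paragraph: one must arrange the combination so that it is exactly $\phi_x(yz)$ — a \emph{left} translate of an element of $V$ — that appears, since that is precisely where left invariance (rather than right invariance) of $V$ is invoked. Everything else is formal: beyond associativity of $G$ and the linearity and left invariance of $V$, nothing is used — in particular no identity element, no topology, and no commutativity — which is why the statement is valid for arbitrary semigroups.
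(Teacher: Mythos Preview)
Your argument is correct and is essentially the classical Sz\'ekelyhidi proof. Note that the paper does not supply its own proof of this theorem: it merely cites \cite{z1} and remarks that ``the proof \ldots\ goes along the same lines as the one in \cite{z1} and \cite{z2}.'' The identity you derive,
\[
\bigl(m(x)m(y)-m(xy)\bigr)f(z)=\phi_{xy}(z)-\phi_x(yz)-m(x)\phi_y(z),
\]
together with the observation that each term on the right lies in $V$ (using left invariance only for the middle term), is exactly the device in Sz\'ekelyhidi's original note, so there is nothing to compare.
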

\begin{thm} \cite{z4} Let $V$ be a vector space of $\mathbb{C}$-valued functions on a semigroup $G$, let $V$ be  invariant and suppose that $f$ and $g$ are $\mathbb{C}$-valued functions on
$G$ which are linearly independent modulo $V$.
 If the function $x\longmapsto f(yx)-f(x)g(y)-f(y)g(x)$ belongs to $V$ for all $y\in G$, then  $f(xy)=f(x)g(y)+f(y)g(x)$ for all $x,y\in G$.  \end{thm}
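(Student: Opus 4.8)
The plan is to run the classical Levi--Civita/Sz\'ekelyhidi bootstrap: convert the single hypothesis into a pair of \emph{exact} addition-type identities by combining associativity in $G$ with the linear independence of $f$ and $g$ modulo $V$, and then make one more pass to remove a residual error term that initially is known only to lie in $V$. Throughout, write $\varphi_y(x):=f(yx)-f(x)g(y)-f(y)g(x)$, so the hypothesis reads $\varphi_y\in V$ for every $y\in G$ and, after relabelling, the desired conclusion is exactly $\varphi_y\equiv 0$ for every $y$. Note at the outset that $f\notin V$ and $g\notin V$ (otherwise $f$ or $g$ would be a nontrivial linear combination of $f,g$ lying in $V$); in particular $f\not\equiv 0$, so we may fix $z_0\in G$ with $f(z_0)\neq 0$.

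The first step is to expand $f(zyx)$ in the two ways $f((zy)x)$ and $f(z(yx))$, applying the hypothesis at each stage (and expanding $f(yx)$ once more on the second side). Subtracting the results and using that $V$ is translation invariant (so that the translates of the $\varphi$'s stay in $V$), one gets, for each fixed $y,z$,
$$\bigl(g(zy)-g(z)g(y)\bigr)f+\bigl(f(zy)-g(z)f(y)\bigr)g-f(z)\,g(y\,\cdot\,)\in V,$$
where $g(y\,\cdot\,)$ is the function $x\mapsto g(yx)$. The real obstacle is this last term: $g(y\,\cdot\,)$ is a priori uncontrolled. I would handle it by specializing $z=z_0$ and dividing by $f(z_0)$, which yields $g(y\,\cdot\,)=a(y)f+b(y)g+(\text{element of }V)$ for the scalar functions $a(y):=\bigl(g(z_0y)-g(z_0)g(y)\bigr)/f(z_0)$ and $b(y):=\bigl(f(z_0y)-g(z_0)f(y)\bigr)/f(z_0)$. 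Substituting this back into the displayed relation leaves a combination $\alpha f+\beta g\in V$ in which $\alpha,\beta$ are \emph{scalars} (depending on $y,z$), so linear independence modulo $V$ forces $\alpha=\beta=0$; that is,
$$g(zy)=g(z)g(y)+f(z)a(y),\qquad f(zy)=g(z)f(y)+f(z)b(y)\qquad(y,z\in G)$$
as genuine identities.

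It remains to prove $b=g$, since then the second identity is precisely $f(xy)=f(x)g(y)+f(y)g(x)$. From the second identity, $\varphi_y(z)=f(y)\bigl(b(z)-g(z)\bigr)$; writing $\eta:=b-g$ and dividing by $f(y)$ for a $y$ with $f(y)\neq 0$ shows $\eta\in V$. Running associativity of $f$ once more (comparing $f((zy)w)$ with $f(z(yw))$ via the two exact identities) gives the cocycle relation $b(yw)=b(y)b(w)+f(w)a(y)$. Now invoke invariance of $V$ on $\eta$: the function $x\mapsto\eta(yx)=b(yx)-g(yx)$ lies in $V$; expanding $b(yx)$ and $g(yx)$ through the identities just obtained, substituting $b=g+\eta$, and discarding the summands $g(y)\eta$ and $\eta(y)\eta$ (both in $V$) reduces this to
$$\eta(y)\,g+a(y)\,f-f(y)\,a\in V\qquad(y\in G).$$
Evaluating at some $y_0$ with $f(y_0)\neq 0$ shows $a\equiv\alpha f+\beta g\pmod V$ for constants $\alpha,\beta$; substituting this back and applying linear independence modulo $V$ a final time forces $\eta(y)=\beta f(y)$ for all $y$, i.e. $\eta=\beta f$. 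Since $\eta\in V$ while $f\notin V$, we conclude $\beta=0$, hence $\eta=0$ and $b=g$, which finishes the proof. The two subtle points are the elimination of the uncontrolled term $g(y\,\cdot\,)$ (dealt with by the fact that $f$ does not vanish identically) and the observation that one use of linear independence is not enough: killing the defect $\eta$ genuinely needs the additional cocycle identity for $b$ together with a second independence argument.
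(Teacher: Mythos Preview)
The paper does not actually prove this theorem: it is quoted verbatim from Sz\'ekelyhidi \cite{z4}, with only the remark that ``the proof \dots\ goes along the same lines as the one in \cite{z1} and \cite{z2}.'' So there is no in-paper argument to compare against.

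That said, your proof is correct and is precisely the kind of bootstrap Sz\'ekelyhidi uses. The double expansion of $f(zyx)$ via associativity is exactly right, and the crucial move---specialising at some $z_0$ with $f(z_0)\neq 0$ to absorb the uncontrolled term $g(y\,\cdot\,)$ as a combination $a(y)f+b(y)g$ modulo $V$---is the standard device. Your first application of linear independence mod $V$ then yields the exact Levi--Civita system
\[
g(zy)=g(z)g(y)+f(z)a(y),\qquad f(zy)=g(z)f(y)+f(z)b(y),
\]
and the remaining task $b=g$ is handled cleanly: writing $\eta=b-g$ you observe $\varphi_y=f(y)\eta$, so $\eta\in V$; the cocycle identity $b(yw)=b(y)b(w)+a(y)f(w)$ (from associativity of the second identity) together with one more pass through invariance and independence forces $\eta=\beta f$ with $\beta f\in V$, hence $\beta=0$. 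Every use of invariance is a \emph{left} translate ($x\mapsto \varphi_z(yx)$ and $x\mapsto\eta(yx)$), which is covered by the hypothesis that $V$ is invariant. The argument is self-contained and matches the spirit of Sz\'ekelyhidi's original; nothing is missing.
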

 In the following theorem we obtain the Hyers-Ulam stability of the
 functional equation (\ref{eq16}). The following lemmas will be helpful in the sequel.
 \begin{lem} Let $\delta\geq0$, let $G$ be a semigroup with  identity element,  $\sigma$: $G\longrightarrow G$ is an homomorphism  such that $\sigma\circ\sigma=I$,
 and $\chi$: $G\longrightarrow \mathbb{C}$ be a bounded multiplicative function
such that $\chi(x\sigma(x))=1$ for all $x\in G$. Suppose that the
pair $f,g:G\rightarrow \mathbb{C}$ satisfies
\begin{equation}\label{eq400}
|f(xy)+\chi(y)f(\sigma(y)x)-2f(x)g(y)|\leq \delta,\;\text{for all
}\;x,y\in G.
\end{equation} Under these assumptions the following statement
hold:\begin{equation}\label{eq401}
|f_a(xy)-f_a(x)g(y)-f_a(y)g(x)|\leq
|g(x)|\delta+\frac{3}{2}\delta,\;\text{for all }\;x,y\in
G,\end{equation}\end{lem}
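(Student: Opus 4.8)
The plan is to re-run, quantitatively, the elimination argument that in the case $\delta=0$ produced identity \eqref{eq3333} and hence the sine addition formula \eqref{eq34} for the pair $(f_a,g)$, where $f_a(t)=f(at)-f(a)g(t)$ in the notation of \eqref{eq331}. The point is that every equality used in deriving \eqref{eq3333} came from a single application of \eqref{eq16}, so replacing each such equality by the corresponding instance of \eqref{eq400} and keeping track of the resulting $\delta$'s will give an approximate version of \eqref{eq3333}, which, after relabelling, is exactly \eqref{eq401}. A preliminary remark that will be used throughout: since $\chi$ is bounded and multiplicative we have $|\chi(x)|\le 1$ for all $x$ (because $|\chi(x)|^{n}=|\chi(x^{n})|$ is bounded), and then $|\chi(x)|\,|\chi(\sigma(x))|=|\chi(x\sigma(x))|=1$ forces $|\chi(x)|=|\chi(\sigma(x))|=1$; hence all moduli of $\chi$-values may be discarded.

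Concretely, fix $x,y,z\in G$ (at the end $(x,y,z)$ will be renamed $(a,x,y)$). First I would substitute $x\mapsto xy$, $y\mapsto z$ into \eqref{eq400} to get the approximate form of \eqref{eq31}, with error $\delta$. To rewrite the term $\chi(z)f(\sigma(z)xy)$ I would apply \eqref{eq400} three more times: with $x\mapsto\sigma(z)x$ (and $y$ unchanged) to express $f(\sigma(z)xy)$ in terms of $f(\sigma(z)x)g(y)$ and $\chi(y)f(\sigma(y)\sigma(z)x)$ up to $\delta$; with $y\mapsto z$ to express $f(\sigma(z)x)$ as $\chi(\sigma(z))\bigl(2f(x)g(z)-f(xz)\bigr)$ up to $\delta$; and with $y\mapsto yz$ to express $\chi(y)f(\sigma(y)\sigma(z)x)$ as $\chi(\sigma(z))\bigl(2f(x)g(yz)-f(xyz)\bigr)$ up to $\delta$ — this last step uses that $\sigma$ is a homomorphism, so $\sigma(y)\sigma(z)=\sigma(yz)$, together with $\chi(yz)\chi(\sigma(z))=\chi(y)$. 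Feeding the last two relations into the first and multiplying through by $\chi(z)$ yields the approximate form of \eqref{eq32},
\[
\bigl|\chi(z)f(\sigma(z)xy)+2f(x)g(yz)-f(xyz)-2g(y)\bigl(2f(x)g(z)-f(xz)\bigr)\bigr|\le 2\delta+2|g(y)|\delta ,
\]
where the term $2|g(y)|\delta$ arises from multiplying by $2g(y)$ the $\delta$-error in the expression for $f(\sigma(z)x)$.

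Finally I would subtract this inequality from the approximate \eqref{eq31}; the term $\chi(z)f(\sigma(z)xy)$ cancels, and after collecting the errors ($\delta$ from \eqref{eq31} plus $2\delta+2|g(y)|\delta$) and dividing by $2$ one obtains, for all $x,y,z\in G$,
\[
\bigl|f(xyz)-f(x)g(yz)-f(xy)g(z)-g(y)f(xz)+2f(x)g(y)g(z)\bigr|\le |g(y)|\delta+\tfrac32\delta .
\]
Renaming $(x,y,z)$ as $(a,x,y)$ and expanding $f_a$ — one checks that $f_a(xy)-f_a(x)g(y)-f_a(y)g(x)$ equals $f(axy)-f(a)g(xy)-f(ax)g(y)-g(x)f(ay)+2f(a)g(x)g(y)$ — turns this into \eqref{eq401}. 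The only genuinely delicate part of the argument is the bookkeeping of the four $\delta$-contributions and verifying that the sole $g$-weighted term is $|g(y)|\delta$ (which becomes $|g(x)|\delta$ after the renaming); the algebraic manipulations themselves are a routine repetition of the $\delta=0$ computation with inequalities in place of equalities.
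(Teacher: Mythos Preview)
Your proof is correct and follows essentially the same route as the paper: both use the same four instances of the functional inequality (at $(xy,z)$, $(\sigma(z)x,y)$, $(x,z)$, and $(x,yz)$) to obtain an approximate version of identity \eqref{eq3333}, then rename variables and rewrite in terms of $f_a$. The paper packages this as the single algebraic identity \eqref{eq402} (where the last term should carry a factor $\chi(z)$, which is harmless since $|\chi|=1$), whereas you carry out the substitutions sequentially with explicit error tracking; the content is the same.
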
 where  $f_a$ is the function defined in
(\ref{eq331}).
\begin{proof}For $x,y\in G$ we put $F(x,y)=f(xy)+\chi(y)f(\sigma(y)x)-2f(x)g(y)$. By using similar computations used in the proof of
Theorem 3.1  we get
\begin{equation}\label{eq402}
    f(xyz)-f(x)g(yz)+2f(x)g(y)g(z)-f(xy)g(z)-g(y)f(xz)\end{equation}$$=-g(y)F(x,z)+\frac{F(x,yz)}{2}+\frac{F(xy,z)}{2}-\frac{F(\sigma(z)x,y)}{2}$$
for all $x,y,z\in G.$ From inequality (\ref{eq400}) and the
definition of $f_a$ we get the desired result.
\end{proof} The second main result of this section  is the next one.
\begin{thm} Let $G$ be a group with  identity element,  $\sigma$: $G\longrightarrow G$ an involutive homomorphism
of $G$
 and $\chi$: $G\longrightarrow \mathbb{C}$ be a unitary character of $G$
such that $\chi(x\sigma(x))=1$ for all $x\in G$. Let the pair
$f,g:G\rightarrow \mathbb{C}$ be given.  Suppose that the function
\begin{equation}\label{eq313}(x,y)\longrightarrow
f(xy)+\chi(y)f(\sigma(y)x)-2f(x)g(y)\end{equation} is bounded. Under
these assumptions the following statements
hold:\\
(i) $f=0$ and $g$ arbitrary.\\
(ii) $f\neq 0$ is bounded and $g$ is bounded.\\
(iii) $f$ is unbounded, $g$ is bounded and $G$ is an amenable group,
then $g\neq 0$ is multiplicative, $g=\chi g\circ\sigma$ and there
exists an additive map $a$: $G\longrightarrow \mathbb{C}$ such that
$f-ag$ is bounded and
$(ag)(xy)+\chi(y)(ag)(\sigma(y)x)=2(ag)(x)g(y)$ for all $x,y\in G.$
\\
(iv) $f$ is unbounded, $g$ is unbounded. In this case there are the
following three possibilities:\\(1) $g$ is multiplicative, $g=\chi
g\circ\sigma$, $f=f(e)g.$
 Furthermore, $f,g$ satisfy equation  (\ref{eq16}).\\
(2) $g$ is multiplicative, $g=\chi g\circ\sigma$, $f=ag$, where  $a$ is an additive map such that $a\circ\sigma=-a$. Furthermore, $f,g$ satisfy equation  (\ref{eq16}).\\
(3) $g=\frac{m+\chi m\circ\sigma}{2}$ and
$f=(c+\frac{f(e)}{2})m-(c-\frac{f(e)}{2})m\circ\sigma,$ where $m$ is
multiplicative.
\end{thm}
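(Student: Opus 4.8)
The plan is to run the argument along the lines of the proof of Theorem 3.1, using Sz\'ekelyhidi's Theorems 3.4 and 3.5 to upgrade the approximate identities furnished by Lemma 3.6 to exact ones, and an invariant mean argument to handle the amenable case. Fix $\delta\geq 0$ bounding the function $F(x,y):=f(xy)+\chi(y)f(\sigma(y)x)-2f(x)g(y)$ in (\ref{eq313}), and set $V:=\mathbf{B}(G)$, the two--sided translation invariant space of bounded complex functions on $G$. If $f=0$ we are in case (i). If $f\neq 0$ is bounded, choose $x_{0}$ with $f(x_{0})\neq 0$; from $|F(x_{0},y)|\leq\delta$ we get $|2f(x_{0})g(y)|\leq\delta+2\|f\|_{\infty}$, so $g$ is bounded, which is case (ii). From now on $f$ is unbounded, and Lemma 3.6 says that for every $a\in G$ the map $x\mapsto f_{a}(yx)-f_{a}(x)g(y)-f_{a}(y)g(x)$ lies in $V$ for each $y$, where $f_{a}$ is as in (\ref{eq331}).

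Assume first $g$ is bounded; this is case (iii), in which $G$ is in addition amenable. For fixed $x$ the map $y\mapsto f_{e}(x)g(y)$ is bounded, so $y\mapsto f_{e}(xy)-f_{e}(y)g(x)$ lies in $V$ for all $x$, and Theorem 3.4 applied to the pair $(f_{e},g)$ forces either $f_{e}\in V$ or $g$ multiplicative. The former gives $f=f_{e}+f(e)g\in V$, contradicting unboundedness of $f$; hence $g$ is multiplicative, and if $g=0$ then Lemma 3.6 again yields $f_{e}\in V$, the same contradiction, so $g\neq 0$, $g(e)=1$, and $|g|\equiv1$ since a bounded multiplicative function on a group has modulus one. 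Dividing the estimate of Lemma 3.6 (with $a=e$) by $g(xy)=g(x)g(y)$ shows $A:=f_{e}/g$ satisfies Cauchy's equation up to a uniform constant; invoking amenability of $G$ and the Hyers--Ulam stability of the Cauchy equation on amenable groups, I obtain an additive $a\colon G\to\mathbb{C}$ with $A-a$ bounded, hence $f_{e}-ag\in V$ and so $f-ag\in V$. Writing $f=ag+r$ with $r\in V$ and substituting into $|F(x,y)|\leq\delta$, then expanding using additivity of $a$ and multiplicativity of $g$ and dividing in turn by $|g(x)|=1$ and $|g(x)g(y)|=1$, I find that $x\mapsto a(x)\big(\chi(y)g(\sigma(y))-g(y)\big)$ is bounded for each $y$ and then that $y\mapsto(a+a\circ\sigma)(y)$ is bounded; since a bounded additive function vanishes and $a\neq0$ (otherwise $f\in V$), this gives $g=\chi g\circ\sigma$ and $a\circ\sigma=-a$, and a direct computation then yields $(ag)(xy)+\chi(y)(ag)(\sigma(y)x)=2(ag)(x)g(y)$. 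This is case (iii).

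Now assume $g$ is unbounded, so $g\notin V$; this is case (iv). If $f_{e}$ and $g$ are linearly independent modulo $V$, then Lemma 3.6 and Theorem 3.5 give the exact sine addition formula $f_{e}(xy)=f_{e}(x)g(y)+f_{e}(y)g(x)$, and the known description of its solutions (used already in the proof of Theorem 3.1) produces precisely the three possibilities there. I dispose of them by inserting the resulting shapes of $f$ and $g$ into the inequality $|F(x,y)|\leq\delta$ in place of (\ref{eq16}); in each case $F(x,y)$ reduces to a product of a fixed unbounded multiplicative function of one variable with a factor in the other, so unboundedness kills that factor. The possibility $f_{e}=cm$, $g=m/2$ is excluded exactly as in Theorem 3.1 (it would force $m$ bounded); the possibility $g=(m+M)/2$, $f_{e}=c(m-M)$ leads, after the $y\mapsto\sigma(y)$ symmetrization used to pass from (\ref{eq37}) to (\ref{eq39}), to $M=\chi m\circ\sigma$ and hence to case (iv)(3); and the possibility $g=m$, $f_{e}=ma'$ gives $m=\chi m\circ\sigma$ and $a'\circ\sigma=-a'$, hence case (iv)(2), collapsing to (iv)(1) when $a'=0$. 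If instead $f_{e}$ and $g$ are linearly dependent modulo $V$, then $g\notin V$ forces $f=cg+r$ with $c\neq0$ and $r\in V$; substituting into $|F(x,y)|\leq\delta$ and absorbing the bounded terms gives $|g(xy)+\chi(y)g(\sigma(y)x)-2g(x)g(y)|\leq C\,(1+|g(y)|)$, so $(g,g)$ satisfies a d'Alembert--variant inequality of Sz\'ekelyhidi type; running Lemma 3.6 and Theorem 3.5 for $(g,g)$ and using unboundedness of $g$ pins $g$ down to the form $(m+\chi m\circ\sigma)/2$ with $m$ multiplicative, after which re-substituting $f=cg+r$ forces $r=0$ and determines $f$, landing in case (iv)(1) or (iv)(3).

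The hard part is this last, linearly dependent sub-case of (iv): one must transfer the stability back and forth between $f$ and $g$, extract the exact form of $g$ (essentially a superstability statement for the variant of d'Alembert's equation), and then exclude any bounded remainder in $f$, so that the conclusion genuinely collapses onto one of the three listed shapes. The remaining work is the routine bookkeeping of plugging each sine addition solution into the bounded inequality, using unboundedness of the multiplicative functions to annihilate the error terms.
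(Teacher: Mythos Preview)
Your treatment of cases (i)--(iii) and of the linearly independent sub-case of (iv) tracks the paper's proof closely. The divergence, and the gap, is in the linearly dependent sub-case of (iv).

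First, case (iv)(1) does not arise from the independent sub-case via ``$a'=0$'': linear independence modulo $V$ forces $f_{e}\notin V$, so $f_{e}=ma'$ with $a'$ additive entails $a'\neq0$. The paper obtains (iv)(1) earlier, from an observation you omit: when $g$ is unbounded, the inequality of Lemma~3.6 shows each $f_{a}$ is either $0$ or unbounded (a bounded nonzero $f_{a}$ would bound $g$ via that inequality at a point where $f_{a}\neq0$). In particular either $f_{e}=0$, giving $f=f(e)g$, $g$ multiplicative, and then $g=\chi g\circ\sigma$ by substitution into the bounded $F$ --- this is (iv)(1) --- or $f_{e}$ is unbounded. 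Only with $f_{e}$ known to be unbounded does the paper split into dependent versus independent.

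Second, and more seriously, in your dependent sub-case the substitution $f=cg+r$ gives only
\[
\bigl|g(xy)+\chi(y)g(\sigma(y)x)-2g(x)g(y)\bigr|\;\leq\;C\bigl(1+|g(y)|\bigr),
\]
which is not a uniform bound; the right side blows up with $|g(y)|$. Lemma~3.6 as stated requires a uniform $\delta$, and Theorems~3.4--3.5 require the relevant defects to lie in $V=\mathbf{B}(G)$, not merely to be dominated by an unbounded function of one variable. So ``running Lemma~3.6 and Theorem~3.5 for $(g,g)$'' does not apply as written, and the subsequent claim that one can force $r=0$ is unsupported. The paper sidesteps this entirely: knowing $f_{e}$ is unbounded, it writes the dependence the other way, $g=\tfrac{1}{2\lambda}f_{e}+b$ with $b\in V$, inserts this into the \emph{uniform} inequality of Lemma~3.6, and reads off that $y\mapsto f_{e}(xy)-\bigl(\tfrac{1}{\lambda}f_{e}(x)+b(x)\bigr)f_{e}(y)$ lies in $V$ for each $x$. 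Theorem~3.4 then makes $m:=\tfrac{1}{\lambda}f_{e}+b$ multiplicative, necessarily unbounded since $g=\tfrac{m+b}{2}$ is; substituting $f=\alpha m+\beta b$ back into the bounded $F(x,y)$ and freezing some $y$ with $\chi(y)m(\sigma(y))\neq b(y)$ (such $y$ exists because $m$ is unbounded and $b$ bounded) forces $\alpha m$ bounded, a contradiction. Thus the dependent sub-case is vacuous, and nothing there needs to ``land'' in (iv)(1) or (iv)(3).
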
\begin{proof}If $f=0$ we deal with case (i). So during the rest of the proof we will assume that $f\neq 0$.
If $f$ is bounded then by using (\ref{eq313}) we get $g$ bounded. This is case (ii)\\
(iii) If $f$ is unbounded and $g$ bounded. We notice here that
$g\neq 0$, because if $g=0$ then from (\ref{eq313}) with $y=e$ we
get $f$ bounded, which contradict our assumption that $f$ is
unbounded. We put $h=f-f(e)$, so $h(e)$=0 and the function
\begin{equation}\label{eq314}(x,y)\longrightarrow
h(xy)-h(x)g(y)-h(y)g(x)\end{equation} is bounded. Thus, the function
$y\longmapsto h(xy)-h(y)g(x)$ is bounded for all $x\in G$. So, by
using Theorem 3.4 we get $g=m$
 multiplicative and the function defined in (\ref{eq314}) remains bounded when the right side is multilied by $m((xy)^{-1})=m(x^{-1})m(y^{-1})$, so that the function $(x,y)\longrightarrow
 \frac{h}{m}(xy)-\frac{h}{m}(x)-\frac{h}{m}(y)$ is bounded. Since
 $G$ is amenable then from \cite{z5}
 we have $\frac{h}{m}(x)=a(x)+b(x)$  for all $x\in G$,  where  $a$: $G\longrightarrow \mathbb{C}$ is an unbounded additive map and
 $b$: $G\longrightarrow \mathbb{C}$ is bounded. On the other hand by Substituting this into
 $B(x,y)=
 h(xy)+\chi(y)h(\sigma(y)x)-2h(x)g(y)$ we get
 $a(xy)m(xy)+b(xy)m(xy)+\chi(y)[a(\sigma(y)x)m(\sigma(y)x)+b(\sigma(y)x)m(\sigma(y)x)]=2[a(x)m(x)+b(x)m(x)]m(y)+B(x,y)$
 and we find after reduction that the function
 \begin{equation}\label{eq315}
   |a(x)m(x)(\chi(y)m(\sigma(y))-m(y))+a(y)m(x)m(y)+\chi(y)a(\sigma(y))m(x)m(\sigma(y))|\leq
\delta
 \end{equation} for all $x,y\in G$ and for some $\delta\geq0.$ By
 replacing $y$ by $\sigma(y)$ in (\ref{eq315}) and using
 $\chi(y\sigma(y))=1$ we get
 \begin{equation}\label{eq316}
   |a(x)m(x)(m(y)-\chi(y)m(\sigma(y)))+\chi(y)a(\sigma(y))m(x)m(\sigma(y))+a(y)m(x)m(y)|\leq
\delta.
 \end{equation}
 Subtracting (\ref{eq315}) from  (\ref{eq316}) we get after some
simplifications that
\begin{equation}\label{eq317}
   |2a(x)m(x)||(m(y)-\chi(y)m(\sigma(y)))|\leq
2\delta
 \end{equation} for all $x,y\in G.$ Since $|m(x)|=1$ and $a$ is
 unbounded then we get $m(y)=\chi(y)m(\sigma(y)))$ for all $y\in
 G.$\\
 Now, we will show that $l=ag$ satisfies
 $l(xy)+\chi(y)l(\sigma(y)x)=2l(x)m(y)$. For all $x,y\in G$ we have
 $$l(xy)+\chi(y)l(\sigma(y)x)-2l(x)m(y)$$$$=(a(x)+a(y))m(x)m(y)+\chi(y)(a(\sigma(y))+a(x))m(\sigma(y))m(x)-2a(x)m(x)m(y)$$
 $$=(a(y)+a(\sigma(y)))m(x)m(y).$$
 Since
 \begin{equation}\label{eq318}
 |h(xy)+\chi(y)h(\sigma(y))-2h(x)g(y)|\leq \beta \end{equation} for some $\beta\geq
 $, $h(e)=0$, $h=m(a+b)$,
 $m$ is multiplicative, $|m(x)|=1$, $m=\chi m\circ\sigma$  and $b$ is bounded then
 if we put $x=e$ in (\ref{eq318}) we get
 $$|m(y)a(y)+m(y)b(y)+\chi(y)m(\sigma(y))a(\sigma(y))+\chi(y)m(\sigma(y))b(\sigma(y))|\leq\beta.$$
 This means that the function  $y\longrightarrow
 |a(y)+a(\sigma(y))|$ is bounded. Since $a+a\circ\sigma$ is an
 additive map then we get $a(y)+a(\sigma(y)=0$ for all $y\in G$ and
 we conclude that $l(xy)+\chi(y)l(\sigma(y)x)=2l(x)g(y)$ for all $x,y\in
 G,$ and we see that we deal with case (iii).\\
 If $f,g$ are unbounded, then by using (\ref{eq401}) we get that either $f_a=0$
 for all $a\in G$ or $f_a$ is unbounded for all $a\in  G$. Indeed, if there exists $a\in G$ with $f_a\neq0$ and $f_a$ bounded, so  from inequality (\ref{eq401})
  with $x=x_0$ where $f_a(x_0)\neq0$ we get $g$ bounded which contredicts the assumption that $g$ is unbounded.\\ In this case  we have
 the following possibilities:\\If $f_a=0$ for all $a\in G$ then
 $f(xy)=f(x)g(y)$ for all $x,y\in G$ and this implies that $g$ is
 multiplicative and $f=f(e)g$. Substituting this into (\ref{eq313})
 we get after reduction that
 $|g(x)|\chi(y)g(\sigma(y))-g(y)|\leq\gamma$
  for all $x,y\in
 G$ and for some $\gamma\geq0$. Since $g$ is unbounded we deduce
 that $g=\chi g\circ\sigma$. So,  $g$ satisfies equation (\ref{eq17}),
 and
the pair $f,g$
 satisfies equation (\ref{eq16}). We deal with case (iv)(1).\\
 If there exists $a\in G$ such that $f_a\neq 0$, then by using the
 above notice we get $f_a$ is unbounded for all $a\in G$. For the
 rest of the proof we put $a=e$ and we will discuss two cases. \\
 First Case: If $f_e, g$ are linearly dependent modulo the spaces of
 complex bounded function on $G$ (see \cite{z4}), then there exists a constant  $\lambda\in\mathbb{C}^{\ast}$ and a bounded function $b$ on $G$ such that
 $g=\frac{1}{2\lambda}f_e+b$.  Substituting this into inequality
 (\ref{eq401}) we get
 $$|f_e(xy)-f_e(x)[\frac{1}{2\lambda}f_e(y)+b(y)]-f_e(y)[\frac{1}{2\lambda}f_e(x)+b(x)]|\leq
|\frac{1}{2\lambda}f_e(x)+b(x)]|\delta+\frac{3}{2}\delta$$ \text{for
all }\;$x,y\in G$, so we have
$$
|f_e(xy)-(\frac{1}{\lambda}f_e(x)+b(x))f_e(y)|\leq |f_e(x)||b(y)|+
|\frac{1}{2\lambda}f_e(x)+b(x)]|\delta+\frac{3}{2}\delta.$$ Thus the
function $y\longrightarrow
f_e(xy)-(\frac{1}{\lambda}f_e(x)+b(x))f_e(y)$ is bounded for all
$x\in G.$ Since $f_e$ is unbounded then from Theorem 3.4 (with  $V$
is the space of bounded function on $G$) we get
$m=\frac{1}{\lambda}f_e+b$  multiplicative, $f_e=\lambda m-\lambda
b$, $g=\frac{m}{2}+\frac{b}{2}$ and
$f=f_e+f(e)g=(\lambda+\frac{f(e)}{2})m+(\frac{f(e)}{2}-\lambda)b=\alpha
m+\beta b$ Substituting this into bounded function
$B(x,y)=f(xy)+\chi(y)f(\sigma(y)x)-2f(x)g(y)$
  we find after reduction that
 \begin{equation}\label{eq320}
 \alpha m(x)[\chi(y)m(\sigma(y))-b(y)]=\beta b(x)m(y)+\beta
  b(x)b(y)-\beta \chi(y)b(\sigma(y)x)+B(x,y)\end{equation}
 for all $x,y\in G.$ Since $b$ is bounded, $m$ is
  unbounded and $|\chi(y)|=1$ then there exists $a\in G$ such that
  $\chi(a)m(\sigma(a))-b(a)\neq 0$. From (\ref{eq320}) we conclude
  that $m$ is a bounded multiplicative and this case does not apply,
  because $m$ is unbounded. So we have the second case:\\
  Case 2: $f_e, g$ are linearly independent modulo the spaces
  of complex bounded function on $G$. From inequality (\ref{eq401})
  and Theorem 3.5, (with  $V$ is the space of bounded function on $G$) reveals that the pair
  $(f_e,g)$ is a solution of the sine addition formulas
  \begin{equation}\label{eq321}
    f_e(xy)=f_e(x)g(y)+f_e(y)g(x)
  \end{equation}for all $x,y\in G$, so we known from [\cite{st3}, Corollary
  4.4] that there are only the following  possibilities:\\
  (1) $f_e=cm$ and $g=\frac{m}{2}$ for some multiplicative function  $m:$ $G\longrightarrow
  \mathbb{C}$. Here $f=f_e+f(e)g=(c+\frac{f_e}{2})m=\gamma m$. Substituting this into bounded function
$B(x,y)=f(xy)+\chi(y)f(\sigma(y)x)-2f(x)g(y)$   we find after
reduction that $\beta\chi(y)m(\sigma(y))m(x)=B(x,y)$. This means
that $m$ is a bounded multiplicative and this case does not apply,
because $m$ is unbounded.\\
(2) $g=m$ and $f_e=am$ for some multiplicative function $m:$
$G\longrightarrow  \mathbb{C}$ and $a:$ $G\longrightarrow
\mathbb{C}$ an additive map. In this case $f=(a+f(e))m$, so we find
after reduction that the bounded function:
$$f(xy)+\chi(y)f(\sigma(y)x)-2f(x)g(y)=(a(x)+a(y)+f(e))m(x)m(y)$$$$+\chi(y)[(a(\sigma(y))+a(x)+f(e))m(\sigma(y))m(x)-2(a(x)+f(e))m(x)m(y)]$$
\begin{equation}\label{eq322}
=(a(x)+f(e))m(x)(\chi(y)m(\sigma(y))-m(y))+m(x)[\chi(y)a(\sigma(y))m(\sigma(y))+a(y)m(y)].\end{equation}
If we replace $y$ with $\sigma(y)$  in (\ref{eq322}), and after we
multiply equation obtained by $\chi(y)$ and using
$\chi(y\sigma(y))=1$ we get
\begin{equation}\label{eq323}
(a(x)+f(e))m(x)(m(y)-\chi(y)m(\sigma(y)))+m(x)(a(y)m(y)+\chi(y)a(\sigma(y))m(\sigma(y))\end{equation}
which is also a bounded function. Subtracting (\ref{eq322}) from
(\ref{eq323}) we get after some simplifications that the function
$(x,y)\longmapsto m(x)(a(x)+f(e))(m(y)-\chi(y)m(\sigma(y)))$ is
bounded. Since $f=m(a+f(e))$ is unbounded then we get $m=\chi
m\circ\sigma.$ Now, we will verify that the pair $(f,g)$ is a
solution of equation (\ref{eq16}). for all $x,y\in G$ we have
 $$f(xy)+\chi(y)f(\sigma(y)x)-f(x)g(y)$$$$=(a(x)+a(y)+f(e))m(x)m(y)$$$$+\chi(y)[a(\sigma(y))+a(x)+f(e))m(\sigma(y))m(x)]-2(a(x)+f(e))m(x)m(y)$$
 $$=(a(y)+a(\sigma(y)))m(x)m(y).$$
 Since $(x,y)\longmapsto f(xy)+\chi(y)f(\sigma(y)x)-f(x)g(y)$ is a
 bounded function, then we have
 $(x,y)\longmapsto(a(y)+a(\sigma(y)))m(x)m(y)$ is also
 bounded. Since $m$ is unbounded then we get the desired result, so
 we see that we deal with case (iv) (2).\\
 (3) There exit two different characters $m,M$ and a constant $c\in
 \mathbb{C}^{\ast}$ such that $g=\frac{m+M}{2}$ and $f_e=c(m-M)$. In this
 case $f=f_e+f(e)g=(c+\frac{f(e)}{2})m-(c-\frac{f(e)}{2})m=\alpha m-\beta
 M$, where $\alpha=c+\frac{f(e)}{2}$ and $\beta=c-\frac{f(e)}{2}$.
  Substituting this into bounded function
$B(x,y)=f(xy)+\chi(y)f(\sigma(y)x)-2f(x)g(y)$   we find after
reduction that
\begin{equation}\label{eq324}
\alpha m(x)(\chi(y)m(\sigma(y))-M(y))+\beta
M(x)(m(y)-\chi(y)M(\sigma(y)))=B(x,y).\end{equation} If we replace
$y$ with $\sigma(y)$  in (\ref{eq324}), and after we multiply
equation obtained by $\chi(y)$ and using $\chi(y\sigma(y))=1$ we get
\begin{equation}\label{eq325}
\alpha m(x)(m(y))-\chi(y)M(\sigma(y)))+\beta
M(x)(\chi(y)m(\sigma(y))-M(y))=\chi(y)B(x,\sigma(y)).\end{equation}
If we add   (\ref{eq324}) to (\ref{eq325}) we get after some
simplifications that the function $(x,y)\longmapsto (\alpha
m(x)+\beta
M(x))[(m(y)-\chi(y)M(\sigma(y)))+(\chi(y)m(\sigma(y))-M(y))]$ is
bounded. Since $\alpha m+\beta M=2c g+\frac{f(e)}{c}f_e$ and $g,f_e$
are linearly independent modulo the space of complex bounded
functions on $G$, $\alpha m+\beta M=2c g+\frac{f(e)}{c}f_e$ is
unbounded then we get $m-\chi M\circ\sigma=M-\chi m\circ\sigma$.
Now,  the bounded function (\ref{eq324}) can be written as follows
\begin{equation}\label{eq326}
f(xy)+\chi(y)f(\sigma(y)x)-2f(x)g(y)=(\alpha m(x)-\beta
M(x))(\chi(y)m(\sigma(y))-M(y))\end{equation}$$=f(x)(\chi(y)m(\sigma(y))-M(y))$$
Since $f$ is assumed to be unbounded then we get
$\chi(y)m(\sigma(y))=M(y)$ for all $y\in G$ and $g$ take the
expression: $g=\frac{m+\chi m\circ\sigma}{2}$.  Equation
(\ref{eq326}) show that the pair $(f,g)$ satisfies equation
(\ref{eq16}). We see that we deal with case (iv) (3) and this
completes the proof.\end{proof} As an application we get the
superstability of the functional equation (\ref{eq35}).
\begin{cor}\label{eq17} Let $\delta\geq0$. Let $G$ be a group with
 identity element,  $\sigma$: $G\longrightarrow G$ an involutive
 homomorphism
 and $\chi$: $G\longrightarrow \mathbb{C}$ be a unitary character of $G$
such that $\chi(x\sigma(x))=1$ for all $x\in G$. Let $f$:
$G\rightarrow \mathbb{C}$ such that
 \begin{equation}\label{eq01}
     |f(xy)+\chi(y)f(\sigma(y)x)-2f(x)f(y)| \leq\delta\end{equation}
     for all $x,y\in G$. Then either $f$ is bounded or $f$ satisfies
     equation (\ref{eq35})
\end{cor}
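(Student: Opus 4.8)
The plan is to read this off from Theorem 3.7 by taking $g=f$. With this choice the map $(x,y)\longmapsto f(xy)+\chi(y)f(\sigma(y)x)-2f(x)g(y)$ becomes $(x,y)\longmapsto f(xy)+\chi(y)f(\sigma(y)x)-2f(x)f(y)$, which by the hypothesis (\ref{eq01}) is bounded (by $\delta$). Hence Theorem 3.7 applies to the pair $(f,f)$, and it only remains to see what its four conclusions say in this situation.

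If $f=0$, or more generally if $f$ is bounded, we are in the first alternative of the corollary; this absorbs cases (i) and (ii) of Theorem 3.7. So suppose $f$ is unbounded. Then case (iii) of Theorem 3.7 is impossible, since it would force $g=f$ to be bounded. Therefore we are necessarily in case (iv), and in each of its three subcases Theorem 3.7 (as checked in the course of its proof) asserts that the pair $(f,g)$ is an honest solution of the functional equation (\ref{eq16}). Substituting $g=f$ into (\ref{eq16}) yields exactly $f(xy)+\chi(y)f(\sigma(y)x)=2f(x)f(y)$, i.e. equation (\ref{eq35}); this is the second alternative of the corollary, and we are done.

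There is essentially no computational work involved; the only genuine issue, and it is a minor one, is ensuring that subcases (iv)(2) and (iv)(3) of Theorem 3.7 really return solutions of (\ref{eq16}) and not merely the listed forms for $f$ and $g$ — this is exactly what the identities at the end of the proof of Theorem 3.7 establish (the expression $f(xy)+\chi(y)f(\sigma(y)x)-2f(x)g(y)$ collapses to $0$ once $a+a\circ\sigma=0$ in subcase (2), and once $\chi\,(m\circ\sigma)=M$ in subcase (3)). One could alternatively observe that subcase (iv)(2) cannot even occur when $f=g$: there $g=ag$ with $g$ a nowhere-vanishing multiplicative function on the group $G$, which would force $a\equiv 1$, contradicting $a(e)=0$; but this refinement is unnecessary. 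Finally, note that the amenability hypothesis used in Theorem 3.7(iii) is irrelevant here precisely because that case has been excluded, which is why it does not appear in the corollary.
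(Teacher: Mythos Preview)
Your argument is correct and is exactly the approach the paper intends: the corollary is stated immediately after Theorem 3.7 as ``an application,'' and it follows by specializing $g=f$ and noting that case (iii) is vacuous while every subcase of (iv) yields a genuine solution of (\ref{eq16}). Your additional observation that subcase (iv)(2) is in fact impossible when $g=f$ is a nice refinement, though as you say unnecessary for the conclusion.
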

In \cite{el4}, the authors presented some rich ideas on the study of
the superstability of symmetrized multiplicative Cauchy   equation
\begin{equation}\label{eq600}
f (xy) + f (yx) = 2f (x)f(y)\;  x, y\in G.\end{equation} However, we
have  formulate the problem as an  open problem. The solutions of
equation (\ref{eq600}) are multiplicative functions (see for exapmle
\cite{st40}). In the following, we give the affirmative answer. If
we put $\chi=1$ and $\sigma=I$ in Corollary 3.8, where $I$ denotes
the identity map we get
\begin{cor}\label{eq17} Let $\delta\geq0$. Let $G$ be a group with  identity
element. Let $f :G\rightarrow \mathbb{C}$ such that
 \begin{equation}\label{eq01}
     |f(xy)+f(yx)-2f(x)f(y)| \leq\delta\end{equation}
     for all $x,y\in G$. Then either $f$ is bounded or $f$ is
     multiplicative.\end{cor}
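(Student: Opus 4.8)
The plan is to deduce this corollary from Corollary 3.8 by specializing $\chi$ to the constant character $1$ and $\sigma$ to the identity map $I$, and then to appeal to the known form of the solutions of the symmetrized multiplicative Cauchy equation. Since all the analytic content has already been carried out --- in the proof of Theorem 3.7, via the Sz\'ekelyhidi-type invariant-subspace results, and in Corollary 3.8 --- the argument here is purely a matter of checking that the degenerate data $\chi\equiv 1$, $\sigma=I$ fall within the scope of those results.

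First I would verify the hypotheses of Corollary 3.8 for these choices. The constant function $\chi\equiv 1$ is a unitary character of $G$, and then the condition $\chi(x\sigma(x))=1$ holds automatically; the identity map $\sigma=I$ is an involutive homomorphism, since $I(xy)=xy=I(x)I(y)$ for all $x,y\in G$ and $I\circ I=I$. For this $\chi$ and $\sigma$,
$$f(xy)+\chi(y)f(\sigma(y)x)-2f(x)f(y)=f(xy)+f(yx)-2f(x)f(y),$$
so the hypothesis of the present corollary, namely $|f(xy)+f(yx)-2f(x)f(y)|\le\delta$ for all $x,y\in G$, is exactly the hypothesis of Corollary 3.8 in this case.

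Applying Corollary 3.8 then yields the dichotomy: either $f$ is bounded on $G$, which is the first alternative of the statement, or $f$ is a solution of equation (\ref{eq35}); and for $\chi\equiv 1$, $\sigma=I$ this equation reads $f(xy)+f(yx)=2f(x)f(y)$ for all $x,y\in G$, that is, equation (\ref{eq600}). In this second case I would invoke the fact that every solution of (\ref{eq600}) is multiplicative (see \cite{st40}), so that $f$ is multiplicative; combining the two cases completes the proof. I do not expect any genuine obstacle here: the only point requiring care is the verification above that the degenerate character and the trivial involution meet the standing assumptions, so that Corollary 3.8 --- and, behind it, Theorem 3.7 --- applies verbatim.
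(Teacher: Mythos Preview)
Your proposal is correct and follows exactly the paper's own argument: the paper derives this corollary by setting $\chi=1$ and $\sigma=I$ in Corollary~3.8, and then invokes the fact (from \cite{st40}) that the solutions of $f(xy)+f(yx)=2f(x)f(y)$ are multiplicative. Your additional verification that $\chi\equiv 1$ and $\sigma=I$ meet the standing hypotheses is a welcome bit of care, but otherwise the approaches coincide.
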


 Elqorachi Elhoucien, Department of Mathematics,
Faculty of Sciences, University  Ibn Zohr, Agadir, Morocco, \\
E-mail: elqorachi@hotmail.com\\
 Redouani Ahmed, Department of Mathematics,
Faculty of Sciences, University  Ibn Zohr, Agadir, Morocco, \\
E-mail: Redouani-ahmed@yahoo.fr
\end{document}